\def \RR {{\mathbb R}}
\def \QQ {{\mathbb Q}}
\def \ZZ {{\mathbb Z}}
\def \a {{\mathfrak a}}
\def \p {{\mathfrak p}}
\def \Pf {{\mathfrak P}}
\def \Oc {{\mathcal O}}
\def \D {{\mathcal D}}
\def \re {{{\rm Re}}}
\def \im {{{\rm Im}}}
\newcommand\Tr[1] {{\rm{Tr}}\left(#1\right)}
\newcommand\md[2] {\equiv#1~{\rm mod}~#2}
\theoremstyle{definition}
\newtheorem{thm}{Theorem}[section]
\newtheorem{lem}[thm]{Lemma}
\newtheorem{definition}[thm]{Definition}
\newtheorem{lemma}[thm]{Lemma}
\newtheorem{proposition}[thm]{Proposition}
\newtheorem{corollary}[thm]{Corollary}
\newtheorem{remark}[thm]{Remark}
\newtheorem{example}[thm]{Example}
\title{Construction of Arakelov-modular Lattices from Number Fields}
\author{Xiaolu Hou}
\address{
Division of Mathematical Sciences\\
Nanyang Technological University, Singapore}
\email {HO0001LU@e.ntu.edu.sg}
\begin{document}

\begin{abstract}
An Arakelov-modular lattice of level $\ell$, where $\ell\in\ZZ_{>0}$, is an $\ell-$modular lattice constructed from a fractional ideal of a CM field such that the lattice can be obtained from its dual by multiplication of an element with norm $\ell$.
The characterization of existence of Arakelov-modular lattices has been completed for cyclotomic fields~\cite{Bayer}. 
In this paper, we extend the definition to totally real number fields and study the criteria for the existence of Arakelov-modular lattices over totally real number fields and CM fields.
We give the characterization of Arakelov-modular lattices over the maximal real subfield of a cyclotomic field with prime power degree and totally real Galois fields with odd degrees.
Characterizations of Arakelov-modular lattices of trace type, which are special cases of Arakelov-modular lattices, are given for quadratic fields and maximal real subfields of cyclotomic fields with non-prime power degrees.
\end{abstract}

%
%
\maketitle

\vspace{1cm}
{\em Keywords:} modular lattice, number field, ideal lattice
\\

{\em Mathematics Subject Classification:} 11H06, 11R80, 11R11

%
%

\section{Introduction}

A lattice is a pair $(L,b)$, where $L$ is a free $\ZZ-$module and $b:L\otimes_\ZZ\RR\times L\otimes_\ZZ\RR\to\RR$ is a positive definite symmetric $\ZZ-$bilinear form~\cite{Ebeling,Martinet}.
The dual of a lattice $(L,b)$ is the lattice $(L^*,b)$, where
\[
	L^*=\{x\in L\otimes_\ZZ\RR:b(x,y)\in\ZZ\ \forall y\in L\}.
\]
A lattice is called \emph{integral} if $L\subseteq L^*$.
An integral lattice is \emph{even} if $b(x,x)\in2\ZZ\ \forall x\in L$ and \emph{odd} otherwise.
For a positive integer $\ell$, a lattice $(L,b)$ is said to be $\ell-$modular (or modular of level $\ell$) if $(L,b)$ is isomorphic to $(L^*,\ell b)$, i.e., if there exists a $\ZZ-$module homomorphism $\varphi:L^*\to L$ such that $\ell b(x,y)=b(\varphi(x),\varphi(y))$ for all $x,y\in L^*$~\cite{Quebbemann}.

In~\cite{Bayer} the definition of \emph{Arakelov-modular lattice} was introduced, which is a lattice constructed over a fractional ideal of a CM field such that it is isomorphic to its dual in a specific way.
In particular, an Arakelov-modular lattice of level $\ell$ is a modular lattice of level $\ell$.
The characterization of existence of Arakelov-modular lattices over cyclotomic fields was completed in \cite{Bayer}: given any cyclotomic field $K$, the possible values of $\ell$ such that there exists an Arakelov-modular lattice of level $\ell$ over $K$ were listed.

The contributions of this paper are to generalize the definition of Arakelov-modular lattice to totally real number fields and characterize the existence of Arakelov-modular lattices over various number fields.
More precisely, we give the characterization of existence of Arakelov-modular lattices over the maximal real subfield of a cyclotomic field with prime power degree (Section~\ref{sec:NF-maximalrealprimepower}) and totally real Galois fields with odd degrees (Section~\ref{sec:NF-OddDegree}).
Characterizations of Arakelov-modular lattices of trace type, which are special cases of Arakelov-modular lattices, are given for imaginary quadratic fields (Section~\ref{sec:NF-CM-ImaginaryQF}), totally real quadratic fields (Section~\ref{sec:NF-TotallyReal-QF}) and maximal real subfields of cyclotomic fields with non-prime power degrees (Section~\ref{sec:NF-maximalrealnonprimepower}).
In Section~\ref{sec:NF-ArakeloveModular}, general criteria for the existence of Arakelov-modular lattices over totally real number fields or CM fields are discussed.
The necessary and sufficient conditions for the existence of Arakelov-modular lattices are presented in Section~\ref{sec:NF-CM} for CM fields and in Section~\ref{sec:NF-TotallyReal} for totally real number fields.
Furthermore some examples of lattices constructed by the methods presented are listed.
In particular, two new extremal lattices will be given.

The motivation for constructing modular lattices also comes from coding theory.
Consider a channel with a sender Alice, a legitimate receiver Bob and an eavesdropper Eve.
Such a channel is called a wiretap channel~\cite{Wyner}.
Lattice codes can be used for encoding in wiretap channel and one popular design criterion for wiretap lattice code is secrecy gain, which gives an upper bound on Eve's knowledge of the encoded message~\cite{Oggier}.
In general, it is not easy to compute secrecy gain~\cite{Oggier}.
For a modular lattice, the notion of weak secrecy gain is defined, which is easier to compute and it is conjectured to be the secrecy gain itself~\cite{Oggier}.
This motivates the analysis of the relation between weak secrecy gain and the modularity $\ell$ of a lattice and hence gives incentives to the study of constructing modular lattices (see e.g. \cite{ITW2014}).

%
%

\section{Arakelov-modular Lattices}\label{sec:NF-ArakeloveModular}

Let $K$ be a totally real number field or a CM field with degree $n$ and ring of integers $\Oc_K$.
We consider $K$ to be a Galois extension with Galois group $G=\{\sigma_1=\text{identity},\sigma_2,\dots,\sigma_n\}$.
For $K$ CM, we assume $\sigma_{i+1}$ is the conjugate of $\sigma_i$, $(i=1,3,5,\dots,n-1)$.
An \emph{ideal lattice}~\cite{BayerIdeallattice} over $K$ is a pair $(I,b_\alpha)$, where $I$ is a fractional $\Oc_K-$ideal, $\alpha\in K^\times$ is totally positive (i.e. $\sigma_i(\alpha)>0, 1\leq i\leq n$) and
\begin{eqnarray*}
	b_\alpha:I\times I&\to&\RR\\
	(x,y)&\mapsto&\Tr{\alpha x\bar{y}},
\end{eqnarray*}
is a positive definite symmetric bilinear form.
Here $\rm{Tr}$ is the trace map on $K/\QQ$, the conjugate $-$ is complex conjugation and it is understood to be the identity map when $K$ is totally real.

Note that here we consider the following twisted canonical embedding of $K\hookrightarrow\RR^n$:
\[
x\mapsto(\sqrt{\sigma_1(\alpha)}\sigma_1(x),\dots,\sqrt{\sigma_n(\alpha)}\sigma_n(x))
\]
for $K$ totally real and
\[
	x\mapsto\sqrt{2}(\sqrt{\sigma_1(\alpha)}\re\sigma_1(x),\sqrt{\sigma_2(\alpha)}\im\sigma_2(x),\dots,\sqrt{\sigma_{n-1}(\alpha)}\re\sigma_{n-1}(x),\sqrt{\sigma_n(\alpha)}\im\sigma_n(x))
\]
for $K$ CM.
More specifically, a \emph{generator matrix} for $(I,b_\alpha)$ as a lattice in $\RR^n$ is given by
\[
	\begin{bmatrix}
		\sigma_1(\omega_1)&\dots&\sigma_n(\omega_1)\\
		\vdots&\ddots&\vdots\\
		\sigma_1(\omega_n)&\dots&\sigma_n(\omega_n)
	\end{bmatrix}
	\begin{bmatrix}
		\sqrt{\sigma_1(\alpha)}&\ &\boldsymbol{0}\\
		\ &\ddots&\ \\
		\boldsymbol{0}&\ &\sqrt{\sigma_n(\alpha)}
	\end{bmatrix}
\]
for $K$ totally real and it is given by
\[
	\sqrt{2}\begin{bmatrix}
		\re\sigma_1(\omega_1)&\im\sigma_2(\omega_1)&\dots&\re\sigma_{n-1}(\omega_1)&\im\sigma_n(\omega_1)\\
		\vdots&\vdots&\ddots&\vdots&\vdots\\
		\re\sigma_1(\omega_n)&\im\sigma_2(\omega_n)&\dots&\re\sigma_{n-1}(\omega_n)&\im\sigma_n(\omega_n)
	\end{bmatrix}
	\begin{bmatrix}
		\sqrt{\sigma_1(\alpha)}&\ &\boldsymbol{0}\\
		\ &\ddots&\ \\
		\boldsymbol{0}&\ &\sqrt{\sigma_n(\alpha)}
	\end{bmatrix}
\]
for $K$ CM, where $\{\omega_1,\dots,\omega_n\}$ is a $\ZZ-$basis for $I$.

Let $\D_K$ be the different of $K/\QQ$.
Recall that $\D_K^{-1}=\{x\in K:\Tr{xy}\in\ZZ\ \forall y\in\Oc_K\}$.
Then for an ideal lattice $(I,b_\alpha)$, its dual lattice is given by $(I^*,b_\alpha)$, where $I^*=\alpha^{-1}\D_K^{-1}\bar{I}^{-1}$~\cite{Bayer}.
The definition for Arakelov-modular lattice was given for $K$ being a CM field in~\cite{Bayer}.
Here we extend this definition to totally real number fields.
\begin{definition}\label{def:NF-Arakelovmodular}
Let $K$ be a totally real number field or a CM field and let $\ell$ be a positive integer, an ideal lattice $(I,b_\alpha)$ is said to be \emph{Arakelov-modular of level $\ell$} if there exists $\beta\in K^\times$ such that $I=\beta I^*$ and $\ell=\beta\bar{\beta}$.
\end{definition}
For an Arakelov-modular lattice $(I,b_\alpha)$ of level $\ell$, define $\varphi:I^*\to I$ to be $x\mapsto\beta x$, then $\ell b_\alpha(x,y)=b_\alpha(\varphi(x),\varphi(y))$.
If furthermore, $(I,b_\alpha)$ is an integral lattice, then it is $\ell-$modular.
But this is always true. 
For any fractional ideal $\a$ and prime ideal $\Pf$ in $\Oc_K$, let $v_\Pf(\a)$ denote the exponent of $\Pf$ in the factorization of $\a$, we have
\begin{lemma}\label{lem:NF-integral}
	An Arakelov-modular lattice is integral.
\end{lemma}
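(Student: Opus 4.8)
The plan is to reduce the integrality of $(I,b_\alpha)$, i.e. the inclusion $I\subseteq I^*$, to the single assertion that the multiplier $\beta$ lies in $\Oc_K$. Since $I^*$ is a fractional $\Oc_K$-ideal and $I=\beta I^*$ by Arakelov-modularity, one has $I=\beta I^*\subseteq I^*$ as soon as $\beta\in\Oc_K$ (and conversely $\beta\in(I^*:I^*)=\Oc_K$ would be forced, the ring of multipliers of a fractional ideal being $\Oc_K$). So everything comes down to proving $\beta\in\Oc_K$, equivalently $v_\Pf(\beta)\ge0$ for every prime $\Pf$ of $\Oc_K$.

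First I would rewrite the principal ideal $(\beta)$ intrinsically. From $I=\beta I^*$ one gets $(\beta)=I(I^*)^{-1}$, and substituting the formula $I^*=\alpha^{-1}\D_K^{-1}\bar I^{-1}$ recorded above yields
\[
	(\beta)=\alpha\,\D_K\,I\bar I .
\]
The key observation is that the right-hand side is invariant under complex conjugation: $\alpha$ is fixed because it is real (being totally positive and giving a symmetric form), the different $\D_K$ is a Galois-stable ideal so $\overline{\D_K}=\D_K$, and $\overline{I\bar I}=\bar I I=I\bar I$. Hence $(\bar\beta)=\overline{(\beta)}=(\beta)$. In the totally real case conjugation is the identity and this is automatic.

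Combining this with $\ell=\beta\bar\beta$, which gives $(\beta)(\bar\beta)=(\ell)=\ell\Oc_K$, I obtain $(\beta)^2=(\ell)$. Since $\ell$ is a positive integer, $\ell\Oc_K$ is an integral ideal, so for each prime $\Pf$ one has $2v_\Pf(\beta)=v_\Pf(\ell)\ge0$, whence $v_\Pf(\beta)\ge0$. Thus $\beta\in\Oc_K$ and $I=\beta I^*\subseteq I^*$, as desired. The only slightly delicate point is the conjugation-invariance of $(\beta)$: it is exactly what upgrades the a priori bound $v_\Pf(\beta)+v_\Pf(\bar\beta)\ge0$ coming from $\beta\bar\beta=\ell$ into the genuine integrality $v_\Pf(\beta)\ge0$. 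Once unique factorization of ideals in the Dedekind domain $\Oc_K$ is invoked, the remainder is formal.
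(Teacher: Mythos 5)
Your proof is correct and follows essentially the same route as the paper's: both derive the ideal identity $(\beta)=\alpha\,\D_K\,I\bar I$ from $I=\beta I^*$ and the dual formula, use its conjugation-invariance together with $\ell=\beta\bar\beta$ to get $2v_\Pf(\beta)=v_\Pf(\ell)\geq 0$, and conclude $\beta\in\Oc_K$, hence $I=\beta I^*\subseteq I^*$. You merely spell out more explicitly why $(\beta)$ is conjugation-stable, which the paper leaves implicit.
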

\begin{proof}
	Let $(I,b_\alpha)$ be an Arakelov-modular lattice of level $\ell$ and $\beta\in K$ such that $\ell=\beta\bar{\beta}$ and $I=\beta I^*$. 
	Then $I^*=\beta^{-1}I=\alpha^{-1}\D_K^{-1}\bar{I}^{-1}$ gives $\beta=\alpha\D_KI\bar{I}$.
	Take any prime ideal $\Pf$ in $\Oc_K$, we have $v_\Pf(\beta)=v_{\bar{\Pf}}(\beta)$.
	So $v_\Pf(\beta)=\frac{1}{2}v_\Pf(\ell)\geq0$ and hence $\beta\in\Oc_K$.
	Thus $I=\beta I^*\subseteq I^*$ shows $(I,b_\alpha)$ is integral.
\end{proof}
For simplicity, we write $(I,\alpha)$ instead of $(I,b_\alpha)$.
When $\alpha=1$ we say this lattice is of \emph{trace type} \cite{Bayer}.
For any prime integer $p$, let $e_p$ denote its ramification index.
Define 
\begin{eqnarray*}
	\Omega(K)&=&\{p|p\text{ is a prime that ramifies in }K/\QQ\}.\\
	\Omega'(K)&=&\{p|p\in\Omega(K)\text{ and }e_p\text{ is even}\}.
\end{eqnarray*}
We have
\begin{lemma}\label{lem:NF-lram}
If there exists an Arakelov-modular lattice of level $\ell$ over $K$ and $\ell$ is square-free, then $\ell|\prod_{p\in\Omega'(K)}p$.
\end{lemma}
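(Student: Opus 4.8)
The plan is to localize at each rational prime dividing $\ell$ and read off the ramification index from the ideal factorization of $(\beta)$ already established in the proof of Lemma~\ref{lem:NF-integral}. Suppose $(I,\alpha)$ is Arakelov-modular of level $\ell$, so there is $\beta\in K^\times$ with $\ell=\beta\bar\beta$ and $I=\beta I^*$. First I would recall from that proof the local identity $v_\Pf(\beta)=\tfrac12 v_\Pf(\ell)$, valid for every prime ideal $\Pf$ of $\Oc_K$. This rests on two inputs: the equality of ideals $(\beta)=\alpha\D_K I\bar I$, which is stable under complex conjugation and hence forces $v_\Pf(\beta)=v_{\bar\Pf}(\beta)$; and the relation $v_\Pf(\ell)=v_\Pf(\beta)+v_{\bar\Pf}(\beta)$ coming from $\ell=\beta\bar\beta$. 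Together these give the factor $\tfrac12$.

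Next I would fix a prime $p\mid\ell$ and a prime ideal $\Pf$ of $\Oc_K$ lying above $p$. Since $\ell$ is square-free, $v_p(\ell)=1$, and since $\ell\in\ZZ$ we have $v_\Pf(\ell)=e(\Pf\mid p)\,v_p(\ell)=e(\Pf\mid p)$. Because $K/\QQ$ is Galois, $e(\Pf\mid p)$ does not depend on the chosen prime above $p$ and equals the common ramification index $e_p$, so $v_\Pf(\ell)=e_p$.

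Combining the two equalities yields $v_\Pf(\beta)=\tfrac12 e_p$. As $\beta\in K^\times$, the valuation $v_\Pf(\beta)$ is an integer, which forces $e_p$ to be even; in particular $e_p\ge 2$, so $p$ ramifies in $K/\QQ$ and thus $p\in\Omega'(K)$. Running this over all primes $p\mid\ell$ and invoking square-freeness gives $\ell=\prod_{p\mid\ell}p\mid\prod_{p\in\Omega'(K)}p$, as claimed.

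I do not expect a genuine obstacle here: once the local equality from Lemma~\ref{lem:NF-integral} is in hand, square-freeness of $\ell$ is exactly what upgrades the integrality of $v_\Pf(\beta)$ to the parity statement on $e_p$. The one point deserving care is the conjugation-invariance of $(\beta)$, which uses that $\alpha$ is fixed by complex conjugation (automatic when $K$ is totally real, and forced by symmetry of $b_\alpha$ when $K$ is CM), so that $(\alpha)$, $\D_K$ and $I\bar I$ are each conjugation-stable; this is what produces the decisive factor of $\tfrac12$.
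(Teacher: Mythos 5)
Your proof is correct, and it takes a somewhat different route from the paper's. The paper splits into cases: for $K$ totally real it simply asserts the claim ``follows from the definition'' (i.e.\ $\ell=\beta^2$ forces $\sqrt{\ell}\in K$, whence even ramification at each prime dividing the square-free $\ell$), and for $K$ CM it outsources the key step to Proposition~3.4 of \cite{Bayer}, which supplies $\lambda\in K$ and a $2^r$th root of unity $\zeta$ with $\lambda^2=\zeta\ell$, and then concludes from square-freeness. You instead give a single uniform, self-contained argument: you reuse the identity $(\beta)=\alpha\D_K I\bar I$ and the resulting equality $v_\Pf(\beta)=\tfrac12 v_\Pf(\ell)$ already established in the proof of Lemma~\ref{lem:NF-integral}, then observe that for $p\mid\ell$ square-free one has $v_\Pf(\ell)=e_p$, so integrality of $v_\Pf(\beta)$ forces $e_p$ even, i.e.\ $p\in\Omega'(K)$. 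Your version buys independence from the external reference and treats both the totally real and CM cases at once (your remark that $\alpha=\bar\alpha$ --- hence $(\beta)$ is conjugation-stable --- is exactly the point that makes the CM case go through); the paper's version is shorter on the page but leans on \cite{Bayer} for the CM half and leaves the totally real half to the reader. Both arguments are sound and ultimately rest on the same valuation parity obstruction.
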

\begin{proof}
If $K$ is totally real, for $\ell$ square-free, this follows from the definition of Arakelov-modular lattices.

If $K$ is CM, Proposition 3.4 of~\cite{Bayer} states that if there exists an Arakelov-modular lattice of level $\ell$ over $K$, then there exists $\lambda\in K$ and a $2^r$th root of unity $\zeta\in K$ for some $r$ such that $\lambda^2=\zeta\ell$.
Since $\ell$ is square-free, the conclusion follows.
\end{proof}
When $K$ has odd degree, for any prime that ramifies in $K$, its ramification index divides the degree of $K$, which is odd, so $\Omega'(K)=\emptyset$.
We have
\begin{corollary}\label{cor:NF-odddegree}
If $K$ has odd degree and there exists an Arakelov-modular lattice of level $\ell$ over $K$, where $\ell$ is square-free, then $\ell=1$.
\end{corollary}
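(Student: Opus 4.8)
The plan is to derive this as an immediate consequence of Lemma~\ref{lem:NF-lram} together with the elementary ramification fact recorded just above the statement. First I would invoke the standing assumption that $K/\QQ$ is Galois, so that for any prime $p$ the fundamental identity $e_p f_p g_p = n$ holds, where $n=[K:\QQ]$, $e_p$ is the common ramification index, $f_p$ the residue degree, and $g_p$ the number of primes of $\Oc_K$ lying over $p$. In particular $e_p \mid n$.

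Since $n$ is odd by hypothesis, every divisor of $n$ is odd, so each $e_p$ is odd. Hence no ramified prime can have even ramification index, which is exactly the assertion that $\Omega'(K)=\emptyset$. With the empty-product convention $\prod_{p\in\Omega'(K)} p = 1$, Lemma~\ref{lem:NF-lram} then yields $\ell \mid 1$ whenever $\ell$ is square-free and an Arakelov-modular lattice of level $\ell$ exists over $K$. As $\ell$ is a positive integer, this forces $\ell=1$.

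There is essentially no obstacle here: the corollary is a direct specialization of Lemma~\ref{lem:NF-lram} to the odd-degree case. The only points worth stating explicitly are the divisibility $e_p \mid n$ coming from the Galois hypothesis and the convention that the empty product equals $1$; both are routine. I would therefore present the argument as a two-line deduction rather than a self-contained proof.
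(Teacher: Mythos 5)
Your argument is exactly the paper's: the standing Galois assumption gives $e_p\mid n$, so odd degree forces $\Omega'(K)=\emptyset$, and Lemma~\ref{lem:NF-lram} with the empty product equal to $1$ yields $\ell=1$. The proposal is correct and matches the paper's reasoning.
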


%
%

\section{CM Fields}\label{sec:NF-CM}

We first consider the case when $K$ is CM.
Let $F$ be the maximal totally real subfield of $K$.
For a positive integer $\ell$, write $\ell=\ell_1\ell_2^2$, where $\ell_1$ is square-free.
If there is an Arakelov-modular lattice $(I,\alpha)$ of level $\ell$ over $K$, then the rescaled lattice $(I,\ell^{-1}_2\alpha)$ is an Arakelov-modular lattice of level $\ell_1$~\cite[Proposition 3.2]{Bayer}.
Here we prove the converse result which allows us to restrict to the case when $\ell$ is square-free.
\begin{proposition}
Let $K$ be a CM field and $\ell_1$ be a square-free positive integer.
Assume there is an Arakelov-modular lattice $(I,\alpha)$ of level $\ell_1$.
Take $\ell=\ell_1\ell_2^2$, where $\ell_2$ is a positive integer coprime with $\ell_1$.
Then the rescaled lattice $(\ell_2I,\ell_2^{-1}\alpha)$ is an Arakelov-modular lattice of level $\ell$.
\end{proposition}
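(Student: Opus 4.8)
The plan is to verify Definition~\ref{def:NF-Arakelovmodular} directly for the rescaled lattice, exploiting the fact that the simultaneous rescaling $I\mapsto\ell_2 I$, $\alpha\mapsto\ell_2^{-1}\alpha$ fixes the dual ideal. First I would record that $(\ell_2 I,\ell_2^{-1}\alpha)$ is a genuine ideal lattice: $\ell_2 I$ is again a fractional $\Oc_K$-ideal since $\ell_2\in\ZZ_{>0}$, and $\ell_2^{-1}\alpha$ is totally positive because $\alpha$ is and $\ell_2>0$, so $b_{\ell_2^{-1}\alpha}$ remains positive definite.

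Next I would compute the dual of the new lattice from the formula $I^*=\alpha^{-1}\D_K^{-1}\bar I^{-1}$. Writing $\alpha'=\ell_2^{-1}\alpha$ and $I'=\ell_2 I$, and using that $\ell_2$ is rational so $\overline{\ell_2 I}=\ell_2\bar I$, I get $(I')^*=(\alpha')^{-1}\D_K^{-1}\bar{I'}^{-1}=\ell_2\,\alpha^{-1}\D_K^{-1}\,\ell_2^{-1}\bar I^{-1}=\alpha^{-1}\D_K^{-1}\bar I^{-1}=I^*$. The two scalar factors cancel and the dual ideal is unchanged; this is the one computation to carry out with care, remembering that complex conjugation acts trivially on the rational scalar $\ell_2$.

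Finally I would invoke the hypothesis: since $(I,\alpha)$ is Arakelov-modular of level $\ell_1$, there is $\beta_1\in K^\times$ with $I=\beta_1 I^*$ and $\ell_1=\beta_1\bar\beta_1$. Setting $\beta'=\ell_2\beta_1\in K^\times$, I obtain $I'=\ell_2 I=\ell_2\beta_1 I^*=\beta'(I')^*$, while $\beta'\bar{\beta'}=\ell_2^2\,\beta_1\bar\beta_1=\ell_1\ell_2^2=\ell$. These are precisely the two conditions of Definition~\ref{def:NF-Arakelovmodular}, so $(\ell_2 I,\ell_2^{-1}\alpha)$ is Arakelov-modular of level $\ell$.

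I do not expect a genuine obstacle here: the argument is a direct substitution, and the only point demanding attention is the dual computation above. I note in passing that the verification never uses the coprimality of $\ell_1$ and $\ell_2$; that hypothesis is not needed for this direction and only serves to guarantee that $\ell_1$ is the square-free part of $\ell$, matching the reduction discussed before the statement.
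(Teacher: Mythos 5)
Your proof is correct and follows essentially the same route as the paper: both identify the dual of the rescaled lattice as $(I^*,\ell_2^{-1}\alpha)$ and then take $\beta=\ell_2\beta_1$ to verify the two conditions of the definition. Your explicit computation showing that the dual ideal is unchanged, and your remark that coprimality of $\ell_1$ and $\ell_2$ is not used, are both accurate refinements of the paper's terser argument.
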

\begin{proof}
	Let $(I^*,\alpha)$ be the dual of $(I,\alpha)$, then $(I^*,\ell_2^{-1}\alpha)$ is the dual of $(\ell_2I,\ell_2^{-1}\alpha)$.
	Note that as $\alpha$ is totally positive, $\ell_2^{-1}\alpha$ is also totally positive.
	By the definition of Arakelov-modular lattice, there exists $\beta_1\in K$ such that $\ell_1=\beta_1\bar{\beta}_1$ and $I=\beta_1I^*$.
	Let $\beta=\beta_1\ell_2$, then $\ell=\beta\bar{\beta}$ and $\ell_2I=\ell_2\beta_1I^*=\beta I^*$, which shows $(\ell_2I,\ell_2^{-1}\alpha)$ is an Arakelov-modular lattice of level $\ell$. 
\end{proof}

From now on we consider $\ell$ to be square-free.
Moreover, we note that
\begin{proposition}\label{prop:NF-vPeven}
	There exists an Arakelov-modular lattice of trace type over $K$ if and only if there exists $\alpha\in K^\times$ totally positive, $\beta\in K^\times$ such that $\ell=\beta\bar{\beta}$ and $v_\Pf(\alpha^{-1}\beta\D_K^{-1})$ is even whenever $\Pf=\bar{\Pf}$ for any prime ideal $\Pf$ in $\Oc_K$.
\end{proposition}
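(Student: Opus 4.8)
The plan is to unwind Definition~\ref{def:NF-Arakelovmodular} into a purely ideal-theoretic condition and then reduce everything to a single structural fact about the map $I\mapsto I\bar I$ on fractional ideals. First I would use the formula $I^*=\alpha^{-1}\D_K^{-1}\bar I^{-1}$ for the dual to record that an ideal lattice $(I,\alpha)$ is Arakelov-modular of level $\ell$ with parameter $\beta$ precisely when $I=\beta\alpha^{-1}\D_K^{-1}\bar I^{-1}$, that is, when
\[
 I\bar I=\alpha^{-1}\beta\D_K^{-1}.
\]
Specialising to $\alpha=1$ shows that a trace-type lattice $(I,1)$ of level $\ell$ exists if and only if there is $\beta$ with $\ell=\beta\bar\beta$ together with a fractional ideal $I$ solving $I\bar I=\beta\D_K^{-1}$. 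This already rewrites both sides of the proposition as solvability of equations of the shape $I\bar I=\a$.

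The engine of the argument is an elementary lemma I would isolate first: a conjugation-stable fractional ideal $\a$ (so $\bar\a=\a$) is of the form $I\bar I$ if and only if $v_\Pf(\a)$ is even for every prime $\Pf$ with $\Pf=\bar\Pf$. The forward implication is immediate, since $v_\Pf(I\bar I)=v_\Pf(I)+v_{\bar\Pf}(I)=2v_\Pf(I)$ at a fixed prime; for the converse one builds $I$ prime by prime, taking $v_\Pf(I)=\frac12 v_\Pf(\a)$ at fixed primes and distributing $v_\Pf(\a)$ arbitrarily between $\Pf$ and $\bar\Pf$ at the split conjugate pairs. I would also record that $\D_K$ is Galois-stable and that a totally positive $\alpha$ necessarily lies in the maximal totally real subfield $F$, so both $(\alpha)$ and $\D_K$ contribute symmetric valuations; these remarks are what let me pass between the ideals $\beta\D_K^{-1}$ and $\alpha^{-1}\beta\D_K^{-1}$.

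With these in hand the forward direction is immediate: if a trace-type lattice $(I,1)$ exists I take $\alpha=1$ (which is totally positive) together with its $\beta$, and then $\alpha^{-1}\beta\D_K^{-1}=I\bar I$ has even valuation at every fixed prime by the lemma. The backward direction is where the real work lies, and I expect it to be the main obstacle. Given $\alpha$ totally positive and $\beta$ with $\ell=\beta\bar\beta$ and $v_\Pf(\alpha^{-1}\beta\D_K^{-1})$ even at all fixed primes, I must manufacture a \emph{conjugation-stable} generator of norm $\ell$. Every element of norm $\ell$ has the form $\beta'=\beta\,\gamma/\bar\gamma$ by Hilbert~90, and the key observation is that such a norm-one twist leaves $v_\Pf(\beta')$ unchanged at each fixed prime, since there $v_\Pf(\gamma/\bar\gamma)=0$; hence the parity data at fixed primes is intrinsic and is automatically preserved.

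The construction then splits into two steps: (i) choose $\gamma$ so that $(\beta')$ is conjugation-stable, which amounts to solving $(\gamma/\bar\gamma)^2=(\bar\beta/\beta)$, a class-group question governed by the identity $(\beta)(\bar\beta)=(\ell)$; and (ii) check that the resulting $\beta'\D_K^{-1}$ still has even valuation at every fixed prime, transferring the given ``$\alpha$-twisted'' parity condition to the untwisted ideal by using that $\alpha\in F$ forces $v_\Pf(\alpha)$ to be even at the ramified fixed primes. Once $\beta'\D_K^{-1}$ is seen to be conjugation-stable with even valuations at all fixed primes, the structural lemma yields an ideal $I$ with $I\bar I=\beta'\D_K^{-1}$, and $(I,1)$ is the desired trace-type Arakelov-modular lattice. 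The delicate point, and the one I would spend the most care on, is reconciling the slack that the auxiliary $\alpha$ provides at the inert fixed primes with the rigidity of the trace-type condition, which sees only the intrinsic valuations of $\ell$ and $\D_K$.
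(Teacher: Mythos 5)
Your forward direction and your structural lemma about when an ideal is of the form $I\bar I$ are fine, and in fact more careful than the paper: the paper's entire proof is the one-line observation that $(I,\alpha)$ is Arakelov-modular of level $\ell$ iff $I\bar I=\alpha^{-1}\beta\D_K^{-1}$, which is solvable iff the valuations at the fixed primes are even. Note that the paper never converts anything to $\alpha=1$: its proof establishes the equivalence with ``there exists an Arakelov-modular lattice $(I,\alpha)$'' on the left, exactly as in the totally real analogue (Proposition~\ref{prop:NF-vPeventotalreal}), and simply does not engage with the words ``of trace type''. (You are also right that solvability of $I\bar I=\a$ additionally requires $\bar{\a}=\a$; the paper omits this, and e.g.\ for $K=\QQ(i)$, $\ell=5$, $\beta=2+i$ the stated parity condition holds with $\alpha=1$ although no Arakelov-modular lattice of level $5$ exists there at all. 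That defect belongs to the statement, not to you.)

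The genuine gap is your backward direction, which you correctly identify as the hard part and do not close. Having taken ``of trace type'' literally, you must convert a witness $(\alpha,\beta)$ with $\alpha\neq1$ into one with $\alpha=1$, and both of your steps (i) and (ii) are left open; step (ii) cannot be carried out, because that implication is false. As you yourself observe, at a fixed prime $\Pf$ the quantity $v_\Pf(\beta\D_K^{-1})=\tfrac12 v_\Pf(\ell)-v_\Pf(\D_K)$ is the same for every $\beta$ of norm $\ell$, whereas $v_\Pf(\alpha)$ is forced to be even only at primes of $F$ that ramify in $K/F$, not at those that are inert. Concretely, take $K=\QQ(\sqrt{5},\sqrt{-3})$, $F=\QQ(\sqrt{5})$, $\ell=3$: the prime $\Pf_5$ above $5$ is fixed, inert over $F$, and $v_{\Pf_5}(\D_K)=1$, so $v_{\Pf_5}(\beta'\D_K^{-1})=-1$ is odd for every $\beta'$ of norm $3$ and no trace-type lattice of level $3$ exists; yet with $\alpha=5+2\sqrt{5}$ (totally positive, generating the prime of $F$ above $5$) and $\beta=\sqrt{-3}$ the stated condition holds and $(\Pf_5^{-1},\alpha)$ is Arakelov-modular of level $3$. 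So the proposition is only true with ``of trace type'' deleted, which is what the paper's proof actually establishes; your machinery already contains a complete proof of that corrected statement, and the Hilbert~90 construction you sketch for the literal version is not salvageable.
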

\begin{proof}
	By the definition of Arakelov-modular lattice, there exists an Arakelov-modular lattice, say $(I,\alpha)$, if and only if $\alpha$ is totally positive, $\exists\beta\in K^\times$ such that $\ell=\beta\bar{\beta}$ and the decomposition $I\bar{I}=\alpha^{-1}\beta\D_K^{-1}$ is possible, which is equivalent to requiring $v_\Pf(\alpha^{-1}\beta\D_K^{-1})$ to be even whenever $\Pf=\bar{\Pf}$ for all prime ideals $\Pf$ in $\Oc_K$.
\end{proof}

%
%

\subsection{Imaginary Quadratic Number Fields}\label{sec:NF-CM-ImaginaryQF}

Suppose $K=\QQ(\sqrt{-d})$, where $d$ is a square-free positive integer.
For any prime $p$ that ramifies in $K/\QQ$, $p$ is totally ramified with a unique $\Oc_K$ prime ideal above it, which we denote by $\Pf_p$.
\begin{proposition}
	There exists an Arakelov-modular lattice of level $\ell$ of trace type over $K$ if and only if $\ell=d$.
	Moreover, $(I,1)$, where
	\[
		I=\begin{cases}
			\Pf_2^{-1}&d\md{1,2}{4}\\
			\Oc_K&d\md{3}{4}
		\end{cases},
	\]
	is an Arakelov-modular lattice of level $d$.
\end{proposition}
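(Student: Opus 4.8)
The plan is to specialize the criterion of Proposition~\ref{prop:NF-vPeven} to trace type ($\alpha=1$): a trace-type Arakelov-modular lattice of level $\ell$ exists over $K$ if and only if there is some $\beta\in K^\times$ with $\ell=\beta\bar\beta$ and $v_\Pf(\beta\D_K^{-1})$ even for every prime ideal $\Pf$ with $\Pf=\bar\Pf$. Since $K/\QQ$ is quadratic, the primes fixed by complex conjugation are exactly the ramified and the inert ones. For an inert prime $p$, comparing $v_\Pf$ on both sides of $(\beta)(\bar\beta)=(\ell)$ and using $\Pf=\bar\Pf$ gives $v_\Pf(\beta)=\tfrac12 v_p(\ell)$; as $\ell$ is square-free this forces $p\nmid\ell$, so that $v_\Pf(\beta)=0=v_\Pf(\D_K)$ and the parity condition at inert primes holds automatically. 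Hence only the ramified primes impose real constraints.

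First I would pin down the different. Since $\Oc_K=\ZZ[\theta]$ is monogenic, $\D_K=(f'(\theta))$ for the minimal polynomial $f$ of $\theta$, which gives $\D_K=(\sqrt{-d})$ when $d\md3{4}$ and $\D_K=(2\sqrt{-d})$ when $d\md{1,2}{4}$. Factoring these through $(p)=\Pf_p^2$ and $(\sqrt{-d})^2=(d)$ records the exponents I actually need: for an odd ramified prime $p$ one has $v_{\Pf_p}(\D_K)=1$, while at $p=2$ one finds $v_{\Pf_2}(\D_K)=2$ for $d\md1{4}$ and $v_{\Pf_2}(\D_K)=3$ for $d\md2{4}$. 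I expect the wild ramification at $2$ to be the one delicate point, since it is precisely the parity of $v_{\Pf_2}(\D_K)$ that decides whether $2$ is forced to divide $\ell$.

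For a ramified prime, comparing $v_{\Pf_p}$ on both sides of $(\beta)(\bar\beta)=(\ell)$ (again using $\Pf_p=\bar\Pf_p$ and $(p)=\Pf_p^2$) yields $v_{\Pf_p}(\beta)=v_p(\ell)$, so the requirement $v_{\Pf_p}(\beta\D_K^{-1})\in2\ZZ$ becomes the parity condition $v_p(\ell)\equiv v_{\Pf_p}(\D_K)\pmod2$. Feeding in the exponents above: for $d\md3{4}$ every $p\mid d$ must divide $\ell$; for $d\md1{4}$ every odd $p\mid d$ must divide $\ell$ while $2\nmid\ell$; for $d\md2{4}$ both $2$ and every odd $p\mid d$ must divide $\ell$. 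Combining each case with Lemma~\ref{lem:NF-lram}, which bounds $\ell$ by $\prod_{p\in\Omega'(K)}p$ (equal to $2d$ when $d\md1{4}$ and to $d$ otherwise), squeezes $\ell$ to the single value $\ell=d$ and settles the forward implication.

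For the converse and the explicit description I would take $\beta=\sqrt{-d}$, so that $\beta\bar\beta=d$. The factorizations above give $\beta\D_K^{-1}=\Oc_K$ when $d\md3{4}$ and $\beta\D_K^{-1}=\Pf_2^{-2}$ when $d\md{1,2}{4}$. As $\Pf_2=\bar\Pf_2$, in each case $\beta\D_K^{-1}$ is of the form $I\bar I$ with $I=\Oc_K$ (resp.\ $I=\Pf_2^{-1}$), so the decomposition required by Proposition~\ref{prop:NF-vPeven} is realized and $(I,1)$ is the claimed Arakelov-modular lattice of level $d$.
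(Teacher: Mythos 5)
Your proposal is correct and follows essentially the same route as the paper's proof: specialize Proposition~\ref{prop:NF-vPeven} to $\alpha=1$, compute $v_{\Pf_p}(\D_K)$ case by case, use $v_{\Pf_p}(\beta)=v_p(\ell)$ at ramified primes together with Lemma~\ref{lem:NF-lram} to force $\ell=d$, and exhibit $\beta=\sqrt{-d}$ with $I=\Oc_K$ or $\Pf_2^{-1}$ for the converse. The only (harmless) difference is that you additionally spell out why inert primes impose no constraint, which the paper subsumes into its appeal to Lemma~\ref{lem:NF-lram}.
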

\begin{proof}
	If $d\md{1,2}{4}$, $\Omega'(K)=\Omega(K)=\{p:p|2d\}$, $\D_K=(2\sqrt{-d})$.
	If $d\md{3}{4}$, $\Omega'(K)=\Omega(K)=\{p:p|d\}$, $\D_K=(\sqrt{-d})$.
	For any $p\in\Omega(K)$,
\[
	v_{\Pf_p}(\D_K)=\begin{cases}
		1&p\text{ odd}\\
		2&p=2,d\md{1}{4}\\
		3&p=2,d\md{2}{4}.
	\end{cases}
\]
Suppose there exists an Arakelov-modular lattice of level $\ell$.
By Lemma \ref{lem:NF-lram}, we only need to consider $\ell$ being a divisor of $\prod_{p\in\Omega(K)}p$.
Take $\beta$ such that $\ell=\beta\bar{\beta}$, by the proof of Lemma \ref{lem:NF-integral},
\[
	v_{\Pf_p}(\beta)=\frac{1}{2}v_{\Pf_p}(\ell)=\begin{cases}
1&p|\ell\\
0&p\nmid\ell
	\end{cases}.
\]
By Proposition \ref{prop:NF-vPeven} and the above discussion,
\[
	\ell=\begin{cases}
		\displaystyle\prod_{p\text{ odd },p|d}&d\md{1,3}{4}\\
		2\displaystyle\prod_{p\text{ odd },p|d}&d\md{2}{4}
	\end{cases}\ =d.
\]
On the other hand, take $\ell=d$.
Then $\beta=\sqrt{-d}$ satisfies $\ell=\beta\bar{\beta}$ and 
\[
	\beta\D_K^{-1}=
	\begin{cases}
		\frac{1}{2}\Oc_K&d\md{1,2}{4}\\
		\Oc_K&d\md{3}{4}.
	\end{cases}
\]
Take 	\[
		I=\begin{cases}
			\Pf_2^{-1}&d\md{1,2}{4}\\
			\Oc_K&d\md{3}{4}
		\end{cases},
	\]
then $I\bar{I}=\beta\D_K^{-1}$ shows $(I,1)$ is an Arakelov-modular lattice of level $d$. 
\end{proof}
Furthermore, we have the following observations:
\begin{itemize}

\item[1.] For $d\md{3}{4}$, $(\Oc_K,1)$ has generator matrix $M$ and Gram matrix $G$ given by
\[
	M=\sqrt{2}\begin{bmatrix}
1&0\\
\frac{1}{2}&-\frac{\sqrt{d}}{2}
	\end{bmatrix},\ 
	G=\begin{bmatrix}
2&1\\
1&\frac{d+1}{2}
	\end{bmatrix}.
\]
Hence $(\Oc_K,1)$ is an even $d-$modular lattice of dimension $2$ with minimum $2$. 

\item[2.] For $d\md{2}{4}$, $\{1,\frac{\sqrt{-d}}{2}\}$ is a basis for $\Pf_2^{-1}$.
Then $(\Pf_2^{-1},1)$ has generator matrix $M$ and Gram matrix $G$ given by
\[
	M=\sqrt{2}\begin{bmatrix}
	1&0\\
	0&-\frac{\sqrt{d}}{2}
	\end{bmatrix},\ 
	G=\begin{bmatrix}
2&0\\
0&\frac{d}{2}
	\end{bmatrix},
\]
which shows $(\Pf_2^{-1},1)$ is an odd $d-$modular lattice of dimension $2$ with minimum $2$.
\item[3.] For $d\md{1}{4}$, $\{1,\frac{1+\sqrt{-d}}{2}\}$ is a basis for $\Pf_2^{-1}$.
$(\Pf_2^{-1},1)$ has generator matrix $M$ and Gram matrix $G$ given by
\[
	M=\sqrt{2}\begin{bmatrix}
1&0\\
\frac{1}{2}&-\frac{\sqrt{d}}{2}
	\end{bmatrix},\ 
	G=\begin{bmatrix}
2&1\\
1&\frac{d+1}{2}
	\end{bmatrix}.
\]
Hence $(\Pf_2^{-1},1)$ is an odd $d-$modular lattice of dimension $2$ with minimum $2$.
\end{itemize}

%
%

\section{Totally Real Number Fields}\label{sec:NF-TotallyReal}

In this section we consider the case when $K$ is a totally real number field.
Let $\ell$ be a positive integer and write $\ell=\ell_1\ell_2^2$, where $\ell_1$ is square-free.
We have the following.
\begin{proposition}
There exists an Arakelov-modular lattice of level $\ell$ over $K$ if and only if there exists an Arakelov-modular lattice of level $\ell_1$ over $K$.
\end{proposition}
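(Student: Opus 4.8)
The plan is to prove both implications by the same explicit rescaling used in the preceding proposition and in \cite[Proposition 3.2]{Bayer}, adapted to the totally real setting, where the conjugation $\bar{\ }$ is the identity map so that $\bar{I}=I$ and $\bar{\beta}=\beta$ throughout. In both directions the only analytic input needed is the formula $I^*=\alpha^{-1}\D_K^{-1}\bar{I}^{-1}$ for the dual ideal, together with the observation that multiplying a totally positive element by the positive integer $\ell_2$ (or its inverse) keeps it totally positive and inside $K^\times$.

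For the implication ``level $\ell_1$ $\Rightarrow$ level $\ell$'', I would start from an Arakelov-modular lattice $(I,\alpha)$ of level $\ell_1$, so that there is $\beta_1\in K^\times$ with $\beta_1\bar{\beta}_1=\ell_1$ and $I=\beta_1 I^*$. I would then consider the rescaled pair $(\ell_2 I,\ell_2^{-1}\alpha)$, noting first that $\ell_2^{-1}\alpha$ is totally positive so this is a genuine ideal lattice. Using the dual formula, I would compute its dual ideal as $(\ell_2^{-1}\alpha)^{-1}\D_K^{-1}\overline{(\ell_2 I)}^{-1}=\alpha^{-1}\D_K^{-1}\bar{I}^{-1}=I^*$, the two factors of $\ell_2$ cancelling. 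Setting $\beta=\ell_2\beta_1$ gives $\beta\bar{\beta}=\ell_2^2\ell_1=\ell$ and $\ell_2 I=\ell_2\beta_1 I^*=\beta I^*$, exhibiting $(\ell_2 I,\ell_2^{-1}\alpha)$ as Arakelov-modular of level $\ell$.

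For the converse ``level $\ell$ $\Rightarrow$ level $\ell_1$'', I would start from $(I,\alpha)$ of level $\ell$ with $\beta\in K^\times$, $\beta\bar{\beta}=\ell$, $I=\beta I^*$, and instead rescale only the form, to $(I,\ell_2^{-1}\alpha)$. Its dual ideal is $(\ell_2^{-1}\alpha)^{-1}\D_K^{-1}\bar{I}^{-1}=\ell_2 I^*$, so taking $\beta'=\beta/\ell_2\in K^\times$ yields $\beta'\bar{\beta}'=\ell/\ell_2^2=\ell_1$ and $I=\beta I^*=\beta'(\ell_2 I^*)$, which shows $(I,\ell_2^{-1}\alpha)$ is Arakelov-modular of level $\ell_1$.

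There is no serious obstacle here; the content is entirely in correct bookkeeping. The one place to be careful is tracking how the different $\D_K$ and the factor $\bar{I}^{-1}$ interact with the two different rescalings of $\alpha$, since it is precisely the cancellation of the powers of $\ell_2$ that makes the dual ideal collapse to $I^*$ (respectively $\ell_2 I^*$). I would also remark that no coprimality between $\ell_1$ and $\ell_2$ is required, so the argument applies to any factorization $\ell=\ell_1\ell_2^2$; the totally real constraint that an element of norm $\ell$ exists, i.e. $\sqrt{\ell}\in K$, is automatically compatible with $\sqrt{\ell_1}\in K$ via the relation $\beta=\ell_2\beta_1$, so it needs no separate treatment.
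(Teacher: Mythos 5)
Your proposal is correct and follows essentially the same route as the paper's own proof: both directions proceed by the identical rescalings $(I,\alpha)\mapsto(I,\ell_2^{-1}\alpha)$ and $(I,\alpha)\mapsto(\ell_2 I,\ell_2^{-1}\alpha)$ with $\beta$ and $\beta_1=\beta/\ell_2$ related exactly as in the paper. Your explicit verification of the dual ideals via $I^*=\alpha^{-1}\D_K^{-1}\bar{I}^{-1}$, and your remark that no coprimality of $\ell_1$ and $\ell_2$ is needed, are accurate refinements of steps the paper merely asserts.
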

\begin{proof}
	First, let $(I,\alpha)$ be an Arakelov-modular lattice of level $\ell$ over $K$, $(I^*,\alpha)$ be the dual lattice.
	Then $(\ell_2I^*,\ell_2^{-1}\alpha)$ is the dual lattice of $(I,\ell_2^{-1}\alpha)$.
	Take $\beta\in K^\times$ such that $I=\beta I^*$, $\ell=\beta^2$.
	Let $\beta_1=\frac{\beta}{\ell_2}\in K^\times$, then $\ell_1=\beta_1^2$ and $I=\beta_1\ell_2I^*$, which shows $(I,\ell_2^{-1}\alpha)$ is an Arakelov-modular lattice of level $\ell_1$.

	Conversely, let $(I,\alpha)$ be an Arakelov-modular lattice of level $\ell_1$ over $K$ and let $(I^*,\alpha)$ be the dual lattice.
	Then $(I^*,\ell_2^{-1}\alpha)$ is the dual of $(\ell_2I,\ell_2^{-1}\alpha)$.
	Take $\beta_1\in K^\times$ such that $\ell_1=\beta_1^2$ and $I=\beta_1 I^*$.
	Let $\beta=\beta_1\ell_2$, then $\ell=\beta^2$ and $\ell_2I=\ell_2\beta_1 I^*=\beta I^*$, which shows $(\ell_2I,\ell_2^{-1}\alpha)$ is an Arakelov-modular lattice of level $\ell$.
\end{proof}
From now on we consider $\ell$ to be square-free.
\begin{proposition}\label{prop:NF-vPeventotalreal}
	There exists an Arakelov-modular lattice over $K$ if and only if there exists $\alpha\in K^\times$ totally positive, $\beta\in K^\times$ such that $\ell=\beta^2$ and $v_\Pf(\alpha^{-1}\beta\D_K^{-1})$ is even for any prime ideal $\Pf$ in $\Oc_K$.
\end{proposition}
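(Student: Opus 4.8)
The plan is to mirror the proof of Proposition~\ref{prop:NF-vPeven}, exploiting that complex conjugation is the identity on the totally real field $K$, so that $\bar{I}=I$ and every prime ideal $\Pf$ of $\Oc_K$ is self-conjugate, $\Pf=\bar{\Pf}$. The entire argument reduces to rewriting the modularity condition $I=\beta I^*$ as an identity of fractional ideals and then invoking the fact that a fractional ideal is the square of a fractional ideal precisely when all of its prime exponents are even. This is exactly where the totally real case specializes the CM statement: the condition ``$v_\Pf$ even whenever $\Pf=\bar{\Pf}$'' becomes ``$v_\Pf$ even for \emph{every} prime $\Pf$''.

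For the forward direction I would start from an Arakelov-modular lattice $(I,\alpha)$ of level $\ell$. By Definition~\ref{def:NF-Arakelovmodular}, $\alpha$ is totally positive and there is $\beta\in K^\times$ with $\ell=\beta\bar{\beta}=\beta^2$ and $I=\beta I^*$. Substituting the dual formula $I^*=\alpha^{-1}\D_K^{-1}\bar{I}^{-1}=\alpha^{-1}\D_K^{-1}I^{-1}$ gives $I^2=\alpha^{-1}\beta\D_K^{-1}$. Applying $v_\Pf$ to both sides yields $v_\Pf(\alpha^{-1}\beta\D_K^{-1})=2\,v_\Pf(I)$, which is even for every prime $\Pf$, as required.

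For the converse I would take $\alpha\in K^\times$ totally positive and $\beta\in K^\times$ with $\ell=\beta^2$ such that all exponents $v_\Pf(\alpha^{-1}\beta\D_K^{-1})$ are even. By unique factorization of fractional ideals, the evenness of every prime exponent shows that $\alpha^{-1}\beta\D_K^{-1}$ is the square of a fractional ideal $I$, i.e.\ $I^2=\alpha^{-1}\beta\D_K^{-1}$. Reversing the computation above gives $I=\beta\,\alpha^{-1}\D_K^{-1}I^{-1}=\beta I^*$, while $\ell=\beta^2=\beta\bar{\beta}$. Since $\alpha$ is totally positive, $b_\alpha(x,x)=\Tr{\alpha x^2}=\sum_i\sigma_i(\alpha)\sigma_i(x)^2>0$ for $x\neq0$, so $(I,\alpha)$ is a well-defined ideal lattice; together with $I=\beta I^*$ and $\ell=\beta\bar{\beta}$ this makes it Arakelov-modular of level $\ell$.

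The computations are routine, so I do not expect a genuine obstacle; the only points deserving care are both in the backward direction. First, one must use unique factorization to actually produce the ideal $I$ from the parity condition, rather than merely asserting its existence. Second, one should note that the hypothesis $\ell=\beta^2$ already presupposes $\sqrt{\ell}\in K$ with $\beta=\pm\sqrt{\ell}$ — this is the feature that distinguishes the totally real case from the CM case, where $\ell=\beta\bar{\beta}$ is a relative norm and need not force $\sqrt{\ell}\in K$. Verifying positive definiteness of $b_\alpha$ directly from total positivity of $\alpha$, as above, then closes the argument.
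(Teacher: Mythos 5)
Your proposal is correct and follows essentially the same route as the paper: both rewrite $I=\beta I^*$ via the dual formula as $I\bar I=I^2=\alpha^{-1}\beta\D_K^{-1}$ and observe that such an $I$ exists precisely when every prime exponent of $\alpha^{-1}\beta\D_K^{-1}$ is even. Your version merely spells out the unique-factorization step and the positive definiteness of $b_\alpha$, which the paper leaves implicit.
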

\begin{proof}
	By the definition of Arakelov-modular lattice, there exists an Arakelov-modular lattice, say $(I,\alpha)$, if and only if $\alpha\in K^\times$ is totally positive, $\exists\beta\in K^\times$ such that $\ell=\beta^2$ and the decomposition $I\bar{I}=I^2=\alpha^{-1}\beta\D_K^{-1}$ is possible, which is equivalent to requiring $v_\Pf(\alpha^{-1}\beta\D_K^{-1})$ to be even for all prime ideals $\Pf$ in $\Oc_K$.
\end{proof}

%
%

\subsection{Totally Real Quadratic Fields}\label{sec:NF-TotallyReal-QF}
Let $K=\QQ(\sqrt{d})$, where $d$ is a square-free positive integer.
For any prime $p$ that ramifies in $K/\QQ$, $p$ is totally ramified with a unique $\Oc_K$ prime ideal above it, which we denote by $\Pf_p$.
\begin{proposition}
	There exists an Arakelov-modular lattice of level $\ell$ of trace type over $K$ if and only if $\ell=d$.
	Moreover, $(I,1)$, where
	\[
		I=\begin{cases}
			\Pf_2^{-1}&d\md{2,3}{4}\\
			\Oc_K&d\md{1}{4}
		\end{cases},
	\]
	is an Arakelov-modular lattice of level $d$.
\end{proposition}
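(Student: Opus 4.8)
The plan is to mirror the imaginary quadratic argument and run everything through the criterion of Proposition~\ref{prop:NF-vPeventotalreal} with $\alpha=1$: a trace-type Arakelov-modular lattice of level $\ell$ exists over $K$ if and only if there is $\beta\in K^\times$ with $\ell=\beta^2$ and $v_\Pf(\beta\D_K^{-1})$ even for every prime ideal $\Pf$ of $\Oc_K$. Thus the whole question reduces to a parity computation of valuations, and the first task is to record the different. Since $K=\QQ(\sqrt d)$ is quadratic, $\D_K=(\sqrt d)$ when $d\md{1}{4}$ and $\D_K=(2\sqrt d)$ when $d\md{2,3}{4}$. From these I would compute $v_{\Pf_p}(\D_K)$ at each ramified prime: the value is $1$ for odd $p$ (tame ramification), while at $p=2$, which ramifies precisely when $d\md{2,3}{4}$, the wild contribution gives $v_{\Pf_2}(\D_K)=2$ when $d\md{3}{4}$ and $v_{\Pf_2}(\D_K)=3$ when $d\md{2}{4}$.

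Next, working under the standing assumption that $\ell$ is square-free and invoking Lemma~\ref{lem:NF-lram}, I may assume $\ell\mid\prod_{p\in\Omega(K)}p$; in a quadratic field every ramified prime has even ramification index, so $\Omega'(K)=\Omega(K)$ consists of the primes dividing $d$, together with $2$ when $d\md{2,3}{4}$. From $\ell=\beta^2$ it follows directly that $v_{\Pf_p}(\beta)=\frac{1}{2}v_{\Pf_p}(\ell)$, which equals $1$ when $p\mid\ell$ and $0$ otherwise, and that $\beta$ has no support away from the ramified primes, so only those primes need to be checked.

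I would then impose the parity condition that $v_{\Pf_p}(\beta)-v_{\Pf_p}(\D_K)$ be even, one ramified prime at a time. For odd $p\mid d$ this forces $v_{\Pf_p}(\beta)$ to be odd, i.e.\ $p\mid\ell$; at $p=2$ it forces $2\nmid\ell$ when $d\md{3}{4}$ and $2\mid\ell$ when $d\md{2}{4}$, whereas $2$ imposes no constraint (and cannot divide $\ell$) when $d\md{1}{4}$. Collecting these constraints in each residue class and using that $d$ is square-free yields exactly $\ell=d$, which establishes the ``only if'' direction.

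Finally, for the converse I would verify that $\ell=d$ works by exhibiting the data directly: take $\beta=\sqrt d$, so $\beta^2=d$, and compute $\beta\D_K^{-1}=\Oc_K$ when $d\md{1}{4}$ and $\beta\D_K^{-1}=\frac{1}{2}\Oc_K=\Pf_2^{-2}$ when $d\md{2,3}{4}$. Choosing $I$ as in the statement then gives $I\bar{I}=I^2=\beta\D_K^{-1}$, so $(I,1)$ is Arakelov-modular of level $d$ by Proposition~\ref{prop:NF-vPeventotalreal}. The only genuinely delicate step is the correct determination of $v_{\Pf_2}(\D_K)$ in the wildly ramified cases $d\md{2,3}{4}$, since an error there would shift the parity conditions at $2$ and spoil the split between the two branches; everything else is the same bookkeeping as in the imaginary quadratic case.
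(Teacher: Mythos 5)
Your proposal is correct and follows essentially the same route as the paper: reduce to the parity criterion of Proposition \ref{prop:NF-vPeventotalreal} with $\alpha=1$, compute $v_{\Pf_p}(\D_K)$ at each ramified prime, use Lemma \ref{lem:NF-lram} and $v_{\Pf_p}(\beta)=\frac{1}{2}v_{\Pf_p}(\ell)$ to force $\ell=d$, and then verify the converse with $\beta=\sqrt d$ and the stated ideal $I$. The valuation values for the different, including the wild cases at $2$, match the paper's.
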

\begin{proof}
	If $d\md{2,3}{4}$, $\Omega'(K)=\Omega(K)=\{p:p|2d\}$, $\D_K=(2\sqrt{d})$.
	If $d\md{1}{4}$, $\Omega'(K)=\Omega(K)=\{p:p|d\}$, $\D_K=(\sqrt{d})$.
	For any $p\in\Omega(K)$,
\[
	v_{\Pf_p}(\D_K)=\begin{cases}
		1&p\text{ odd}\\
		2&p=2,d\md{3}{4}\\
		3&p=2,d\md{2}{4}.
	\end{cases}
\]
Suppose there exists an Arakelov-modular lattice of level $\ell$.
By Lemma \ref{lem:NF-lram}, we only need to consider $\ell$ being a divisor of $\prod_{p\in\Omega(K)}p$.
Take $\beta$ such that $\ell=\beta^2$, then
\[
	v_{\Pf_p}(\beta)=\frac{1}{2}v_{\Pf_p}(\ell)=\begin{cases}
1&p|\ell\\
0&p\nmid\ell
	\end{cases}.
\]
By Proposition \ref{prop:NF-vPeventotalreal} and the above discussion,
\[
	\ell=\begin{cases}
		\displaystyle\prod_{p\text{ odd },p|d}&d\md{1,3}{4}\\
		2\displaystyle\prod_{p\text{ odd },p|d}&d\md{2}{4}
	\end{cases}\ =d.
\]
On the other hand, take $\ell=d$.
Then $\beta=\sqrt{d}$ satisfies $\ell=\beta^2$ and 
\[
	\beta\D_K^{-1}=
	\begin{cases}
		\frac{1}{2}\Oc_K&d\md{2,3}{4}\\
		\Oc_K&d\md{1}{4}.
	\end{cases}
\]
Take 	\[
		I=\begin{cases}
			\Pf_2^{-1}&d\md{2,3}{4}\\
			\Oc_K&d\md{1}{4}
		\end{cases},
	\]
then $I^2=\beta\D_K^{-1}$ shows $(I,1)$ is an Arakelov-modular lattice of level $d$. 
\end{proof}
Furthermore, we have the following observations:
\begin{itemize}

\item[1.] For $d\md{1}{4}$, $(\Oc_K,1)$ has generator matrix $M$ and Gram matrix $G$ given by
\[
	M=\begin{bmatrix}
1&1\\
\frac{1+\sqrt{d}}{2}&\frac{1-\sqrt{d}}{2}
	\end{bmatrix},\ 
	G=\begin{bmatrix}
2&1\\
1&\frac{d+1}{2}
	\end{bmatrix}.
\]
Hence $(\Oc_K,1)$ is an odd $d-$modular lattice of dimension $2$ with minimum $2$. 

\item[2.] For $d\md{2}{4}$, $\{1,\frac{\sqrt{d}}{2}\}$ is a basis for $\Pf_2^{-1}$.
Then $(\Pf_2^{-1},1)$ has generator matrix $M$ and Gram matrix $G$ given by
\[
	M=\begin{bmatrix}
	1&1\\
	\frac{\sqrt{d}}{2}&-\frac{\sqrt{d}}{2}
	\end{bmatrix},\ 
	G=\begin{bmatrix}
2&0\\
0&\frac{d}{2}
	\end{bmatrix},
\]
which shows $(\Pf_2^{-1},1)$ is an odd $d-$modular lattice of dimension $2$ with minimum $1$ for $d=2$ and minimum $2$ otherwise.
\item[3.] For $d\md{3}{4}$, $\{1,\frac{1+\sqrt{d}}{2}\}$ is a basis for $\Pf_2^{-1}$.
$(\Pf_2^{-1},1)$ has generator matrix $M$ and Gram matrix $G$ given by
\[
	M=\begin{bmatrix}
1&1\\
\frac{1+\sqrt{d}}{2}&\frac{1-\sqrt{d}}{2}
	\end{bmatrix},\ 
	G=\begin{bmatrix}
2&1\\
1&\frac{d+1}{2}
	\end{bmatrix}.
\]
Hence $(\Pf_2^{-1},1)$ is an even $d-$modular lattice of dimension $2$ with minimum $2$.
\end{itemize}

%
%

\subsection{Maximal Real Subfield of a Cyclotomic Field -- The Prime Power Case}\label{sec:NF-maximalrealprimepower}

Let $p$ be an odd prime, $r$ a positive integer and $\zeta_{p^r}$ a primitive $p^r$th root of unity.
In this subsection we consider the case $K=\QQ(\zeta_{p^r}+\zeta_{p^r}^{-1})$.
Let Mod$_T(p^r)$ denote the set of $\ell$ such that there exists an Arakelov-modular lattice of trace type of level $\ell$ over $K=\QQ(\zeta_{p^r}+\zeta_{p^r}^{-1})$ and let Mod$(p^r)$ denote the set of $\ell$ such that there exists an Arakelov-modular lattice of level $\ell$ over $K=\QQ(\zeta_{p^r}+\zeta_{p^r}^{-1})$.

Recall that $p$ is the only prime that ramifies and it is totally ramified with ramification index $\frac{p^r(p-1)}{2}$.
From Lemma \ref{lem:NF-lram} we have
\begin{corollary}\label{cor:NF-MOD}
	\begin{itemize}
		\item[1.] $\text{Mod}_T(p^r)\subseteq\text{Mod}(p^r)\subseteq\{1\}\text{ if }p\md{3}{4}$,
		\item[2.] $\text{Mod}_T(p^r)\subseteq\text{Mod}(p^r)\subseteq\{1,p\}\text{ if }p\md{1}{4}$.
	\end{itemize}
\end{corollary}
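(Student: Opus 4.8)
The plan is to obtain both assertions directly from the structural results already in hand, so that no new construction is needed. The inclusion $\text{Mod}_T(p^r)\subseteq\text{Mod}(p^r)$ is purely definitional: a lattice of trace type is an Arakelov-modular lattice with $\alpha=1$, hence every level attained by a trace-type lattice is attained by an Arakelov-modular lattice. The content of the statement is therefore the upper bound on $\text{Mod}(p^r)$, for which I would apply Lemma~\ref{lem:NF-lram} to the ramification data recalled immediately above the corollary, keeping in mind the standing assumption that $\ell$ is square-free.

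First I would record that $\Omega(K)=\{p\}$, since $p$ is the unique prime ramifying in $K/\QQ$. For square-free $\ell$, Lemma~\ref{lem:NF-lram} then gives $\ell\mid\prod_{q\in\Omega'(K)}q$, so the whole question reduces to deciding whether $p\in\Omega'(K)$, i.e. whether the ramification index $e_p$ is even. Because $p$ is odd, the value of $e_p$ recalled above has the same parity as $\tfrac{p-1}{2}$; the odd factor contributed by the power of $p$ is irrelevant to the parity.

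The decisive step is the elementary parity dichotomy. When $p\equiv3\pmod4$ we have $p-1\equiv2\pmod4$, so $\tfrac{p-1}{2}$ and hence $e_p$ is odd; thus $\Omega'(K)=\emptyset$, the empty product equals $1$, and Lemma~\ref{lem:NF-lram} forces $\ell=1$, proving $\text{Mod}(p^r)\subseteq\{1\}$. When $p\equiv1\pmod4$ we have $p-1\equiv0\pmod4$, so $e_p$ is even; then $\Omega'(K)=\{p\}$, the product equals $p$, and $\ell\mid p$ leaves only $\ell\in\{1,p\}$, proving $\text{Mod}(p^r)\subseteq\{1,p\}$.

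I do not anticipate a genuine obstacle, as this is a direct reading of Lemma~\ref{lem:NF-lram}. The two points deserving care are the standing restriction to square-free $\ell$ (without which the lemma does not apply and perfect squares would enter the sets) and the empty-product convention used in the case $p\equiv3\pmod4$. I would also remark that in that case the degree $e_p$ of $K$ is itself odd, so that Corollary~\ref{cor:NF-odddegree} delivers $\text{Mod}(p^r)\subseteq\{1\}$ at once; the argument through $\Omega'(K)$ is simply the uniform version that also handles $p\equiv1\pmod4$.
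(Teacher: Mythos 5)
Your proof is correct and matches the paper's (implicit) argument: the paper presents this corollary as an immediate consequence of Lemma~\ref{lem:NF-lram} together with the recalled fact that $p$ is the only ramified prime, totally ramified, so the bound reduces to the parity of $e_p=p^{r-1}\tfrac{p-1}{2}$, exactly as you argue. Your closing remark that the $p\equiv3\pmod 4$ case also follows from Corollary~\ref{cor:NF-odddegree} (since the degree is then odd) is a valid consistency check but not needed.
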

Let $\Pf$ denote the prime ideal in $\Oc_K$ above $p$, then~\cite{Swinnerton-Dyer}
\begin{equation}\label{eqn:NF-vBD}
	v_\Pf(\D_K)=\frac{1}{2}(p^{r-1}(pr-r-1)-1)\equiv
	\begin{cases}
		1~{\rm mod}~2&p\md{1}{4}\\
		0~{\rm mod}~2&p\md{3}{4}
	\end{cases}
\end{equation}
We have the following characterization of Arakelov-modular lattices of trace type.
For the more general case, characterizations of Arakelov-modular lattices will be given in Proposition~\ref{prop:NF-MaximalPrimeClassification}.
\begin{proposition}\label{prop:NF-ModMaximalPrime}
	There exists an Arakelov-modular lattice of level $\ell$ of trace type over $\QQ(\zeta_{p^r}+\zeta_{p^r}^{-1})$ if and only if $\ell\in\text{Mod}_T(p^r)$, where Mod$_T(p^r)$ is given by
	\begin{itemize}
		\item[1.] Mod$_T(p^r)=\{1\}$, if $p\md{3}{4}$;
		\item[2.] Mod$_T(p^r)=\emptyset$, if $p\md{1}{8}$;
		\item[3.] Mod$_T(p^r)=\{p\}$, if $p\md{5}{8}$;
	\end{itemize}
\end{proposition}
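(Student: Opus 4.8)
The plan is to apply the trace-type criterion of Proposition~\ref{prop:NF-vPeventotalreal} with $\alpha=1$: for each admissible level $\ell$ one must decide whether there is $\beta\in K^\times$ with $\beta^2=\ell$ and $v_\Pf(\beta\D_K^{-1})$ even for every prime ideal $\Pf$ of $\Oc_K$. By Corollary~\ref{cor:NF-MOD} the only candidates are $\ell=1$ and, when $p\md{1}{4}$, $\ell=p$. Since $p$ is the unique ramified prime, for every prime $\Pf$ not above $p$ one has $v_\Pf(\D_K)=0$, and the relevant values of $\beta$ below will all be $\Pf$-units there; hence the entire question collapses to the single prime $\Pf$ lying above $p$, whose different valuation and its parity are recorded in~\eqref{eqn:NF-vBD}.

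First I would dispose of $\ell=1$. Here $\beta=\pm1$ is a unit, so the requirement is that $v_\Pf(\D_K^{-1})=-v_\Pf(\D_K)$ be even, i.e. that $v_\Pf(\D_K)$ be even. By~\eqref{eqn:NF-vBD} this happens exactly when $p\md{3}{4}$. Combined with the inclusion $\text{Mod}_T(p^r)\subseteq\{1\}$ of Corollary~\ref{cor:NF-MOD}, this immediately gives $\text{Mod}_T(p^r)=\{1\}$ for $p\md{3}{4}$ (case~1), and shows $1\notin\text{Mod}_T(p^r)$ when $p\md{1}{4}$, so that cases~2 and~3 reduce to deciding whether $\ell=p$ belongs to $\text{Mod}_T(p^r)$.

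For $\ell=p$ (so $p\md{1}{4}$) the only solutions of $\beta^2=p$ are $\beta=\pm\sqrt p$, which share the same $\Pf$-valuation, so I must first verify that $\sqrt p$ genuinely lies in $K$. This is where the cyclotomic structure enters: the quadratic subfield of $\QQ(\zeta_{p^r})$ is $\QQ(\sqrt{p^*})$ with $p^*=(-1)^{(p-1)/2}p$, so for $p\md{1}{4}$ one has $\sqrt p\in\QQ(\zeta_{p^r})$, and being real it lies in the maximal real subfield $K$. Since $\sqrt p\in\Oc_K$ and $p$ is totally ramified, $(\sqrt p)^2=(p)=\Pf^{e}$ with $e=[K:\QQ]$ forces $v_\Pf(\sqrt p)=e/2$; as $4\mid p-1$ this is a positive integer whose parity, the odd power of $p$ being irrelevant, is that of $\tfrac{p-1}{4}$. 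Thus $v_\Pf(\sqrt p)$ is even when $p\md{1}{8}$ and odd when $p\md{5}{8}$, while $v_\Pf(\D_K)$ is odd in both cases by~\eqref{eqn:NF-vBD}. Therefore $v_\Pf(\sqrt p\,\D_K^{-1})$ is odd for $p\md{1}{8}$ and even for $p\md{5}{8}$, giving $p\notin\text{Mod}_T(p^r)$ in the first case and $p\in\text{Mod}_T(p^r)$ in the second; with Corollary~\ref{cor:NF-MOD} this yields $\text{Mod}_T(p^r)=\emptyset$ (case~2) and $\text{Mod}_T(p^r)=\{p\}$ (case~3). The main obstacle is precisely this $\ell=p$ analysis: one must justify $\sqrt p\in K$ through the quadratic subfield of the cyclotomic field and then track the parity of $v_\Pf(\sqrt p)$ against $v_\Pf(\D_K)$ modulo $8$; once~\eqref{eqn:NF-vBD} is available, the $\ell=1$ case and the reductions are routine.
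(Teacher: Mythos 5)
Your proposal is correct and follows essentially the same route as the paper: apply Proposition~\ref{prop:NF-vPeventotalreal} with $\alpha=1$, use the parity of $v_\Pf(\D_K)$ from~\eqref{eqn:NF-vBD} to settle $\ell=1$, and for $\ell=p$ take $\beta=\sqrt p\in K$ (the paper justifies this via $\QQ(\sqrt p)\subseteq\QQ(\zeta_p)\cap\RR$, you via the quadratic subfield $\QQ(\sqrt{p^*})$ --- the same fact) and compute $v_\Pf(\sqrt p)=\tfrac14 p^{r-1}(p-1)$, whose parity mod $8$ decides between cases~2 and~3. No gaps.
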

\begin{proof}
	1. If $p\md{3}{4}$, $1$ is a square in $K$ with $1=1^2$.
	By (\ref{eqn:NF-vBD}), $v_\Pf(\D_K^{-1})$ is even.
	Then by Proposition \ref{prop:NF-vPeventotalreal}, $1\in\text{Mod}_T(p^r)$.
	By Corollary \ref{cor:NF-MOD}, Mod$_T(p^r)=\{1\}$.

	2. If $p\md{1}{8}$, $1$ is a square in $K$ with $1=1^2$.
	By (\ref{eqn:NF-vBD}), $v_\Pf(\D_K^{-1})$ is odd.
	Then by Proposition \ref{prop:NF-vPeventotalreal}, $1\notin\text{Mod}_T(p^r)$.
	Now take $\ell=p$, we have $\sqrt{p}\in\QQ(\sqrt{p})\subseteq\QQ(\zeta_p)\cap\RR\subseteq\QQ(\zeta_{p^r})\cap\RR=K$ (see~\cite{Washington} p.17).
	Let $\beta=\sqrt{p}$, then $\ell=\beta^2$ and $v_\Pf(\beta)=\frac{1}{4}p^{r-1}(p-1)$ is even.
	Hence $v_\Pf(\beta\D_K^{-1})$ is odd and by Proposition \ref{prop:NF-vPeventotalreal}, $p\notin\text{Mod}_T(p^r)$.
	By Corollary \ref{cor:NF-MOD}, Mod$_T(p^r)=\emptyset$.

	3. If $p\md{5}{8}$, same as above $1\notin\text{Mod}_T(p^r)$.
	Take $\ell=p$.
	Similarly, let $\beta=\sqrt{p}$, then $\ell=\beta^2$ and $v_\Pf(\beta)=\frac{1}{4}p^{r-1}(p-1)$ is odd.
	Hence $v_\Pf(\beta\D_K^{-1})$ is even and by Proposition \ref{prop:NF-vPeventotalreal}, $p\in\text{Mod}_T(p^r)$.
	By Corollary \ref{cor:NF-MOD}, Mod$_T(p^r)=\{p\}$.
\end{proof}
Define
\begin{eqnarray*}
	s_1&:=&v_\Pf(\D_K^{-1}),\\
	s_2&:=&\frac{1}{2}v_\Pf(p)=\frac{1}{4}p^{r-1}(p-1)
\end{eqnarray*}
From the above proof we have
\begin{corollary}
	If $p\md{3}{4}$, $(\Pf^{\frac{s_1}{2}},1)$ is an unimodular lattice over $K$.
	If $p\md{5}{8}$, $(\Pf^{\frac{s_1+s_2}{2},1})$ is an Arakelov-modular lattice of level $p$ over $K$.
\end{corollary}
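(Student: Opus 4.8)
The plan is to read off the explicit ideal $I$ directly from the two constructions already produced inside the proof of Proposition~\ref{prop:NF-ModMaximalPrime}, rather than to start afresh. The starting observation is that $p$ is the unique ramified prime of $K/\QQ$ and is totally ramified, so every ideal supported at $p$ is a pure power of $\Pf$; in particular $\D_K^{-1}=\Pf^{s_1}$ with $s_1=v_\Pf(\D_K^{-1})$. By Proposition~\ref{prop:NF-vPeventotalreal}, for a trace-type lattice (so $\alpha=1$) of level $\ell=\beta^2$ the existence of an Arakelov-modular lattice is equivalent to solving the ideal equation $I^2=\beta\D_K^{-1}$. In both cases below $\beta$ itself generates a power of $\Pf$, so the right-hand side is a power of $\Pf$ and the equation forces $I=\Pf^{(v_\Pf(\beta)+s_1)/2}$ provided the exponent is an integer. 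Thus in each case I only need to name the $\beta$ used earlier and check integrality of the exponent.

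For $p\md{3}{4}$ I would take $\beta=1$ and $\ell=1$, exactly as in part~1 of the proof of Proposition~\ref{prop:NF-ModMaximalPrime}. Then the ideal equation becomes $I^2=\D_K^{-1}=\Pf^{s_1}$, and since (\ref{eqn:NF-vBD}) gives $v_\Pf(\D_K)$ even in this case, $s_1$ is even and $I=\Pf^{s_1/2}$ is a genuine fractional ideal solving it. The resulting lattice $(\Pf^{s_1/2},1)$ is therefore Arakelov-modular of level $1$; being integral by Lemma~\ref{lem:NF-integral}, it is $1$-modular, i.e. unimodular.

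For $p\md{5}{8}$ I would take $\beta=\sqrt{p}$ and $\ell=p$, as in part~3. The key computation is $(\beta)=\Pf^{s_2}$: indeed $(\sqrt{p})^2=(p)=\Pf^{v_\Pf(p)}=\Pf^{2s_2}$ by total ramification and the definition $s_2=\tfrac12 v_\Pf(p)$, whence $(\sqrt{p})=\Pf^{s_2}$ and $v_\Pf(\beta)=s_2$. The ideal equation then reads $I^2=\beta\D_K^{-1}=\Pf^{s_1+s_2}$. Here $s_1$ is odd (by (\ref{eqn:NF-vBD}), since $p\md{1}{4}$) and $s_2=\tfrac14 p^{r-1}(p-1)$ is odd (because $p\md{5}{8}$ forces $\tfrac{p-1}{4}$ odd), so $s_1+s_2$ is even and $I=\Pf^{(s_1+s_2)/2}$ solves the equation. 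Hence $(\Pf^{(s_1+s_2)/2},1)$ is Arakelov-modular of level $p$.

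I do not expect a serious obstacle here: the corollary is essentially a bookkeeping extraction of the explicit solution ideal from the existence argument of Proposition~\ref{prop:NF-ModMaximalPrime}. The only points that genuinely need verification are the two parity statements guaranteeing that $s_1/2$ and $(s_1+s_2)/2$ are integers, and the identity $(\sqrt{p})=\Pf^{s_2}$, both of which reduce to the total ramification of $p$ together with (\ref{eqn:NF-vBD}).
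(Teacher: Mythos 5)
Your proposal is correct and matches the paper's intent exactly: the paper offers no separate argument for this corollary, stating only ``From the above proof we have,'' and your write-up is precisely that extraction of the explicit ideal $I=\Pf^{(v_\Pf(\beta)+s_1)/2}$ from the two cases of the proof of Proposition~\ref{prop:NF-ModMaximalPrime}, with the parity checks and the identity $(\sqrt{p})=\Pf^{s_2}$ filled in. Nothing further is needed.
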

\begin{lemma}\label{lem:NF-totalpos}
	$\Pf=(2-2\cos\frac{2\pi}{p^r})$ and $2-2\cos\frac{2\pi}{p^r}$ is totally positive in $K$.
\end{lemma}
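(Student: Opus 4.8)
The plan is to realize $2-2\cos\frac{2\pi}{p^r}$ as a relative norm, which makes both assertions transparent. Write $\zeta=\zeta_{p^r}$ and $L=\QQ(\zeta)$, so that $K$ is the maximal real subfield of $L$, $[L:K]=2$, and the nontrivial element of $\mathrm{Gal}(L/K)$ is complex conjugation $\zeta\mapsto\zeta^{-1}$. The computation begins from the identity
\[
2-2\cos\tfrac{2\pi}{p^r}=2-(\zeta+\zeta^{-1})=(1-\zeta)(1-\zeta^{-1}),
\]
which shows at once that this real number is fixed by $\zeta\mapsto\zeta^{-1}$ (hence lies in $K$), is a product of algebraic integers (hence lies in $\Oc_K$), and equals $\Nm{L/K}{1-\zeta}$.

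For total positivity I would argue embedding by embedding. Each real embedding $\sigma$ of $K$ extends to a complex embedding of $L$ carrying $\zeta$ to some primitive $p^r$th root of unity $\zeta^a$ with $p\nmid a$; under it the element maps to $(1-\zeta^a)(1-\overline{\zeta^a})=|1-\zeta^a|^2$. Since $\zeta^a\neq1$ for every $a$ coprime to $p$, each such value is strictly positive, so $2-2\cos\frac{2\pi}{p^r}$ is totally positive.

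For the ideal identity I would compute the absolute norm of the generator. Transitivity of the norm together with the identity above gives
\[
\Nm{K/\QQ}{2-2\cos\tfrac{2\pi}{p^r}}=\Nm{L/\QQ}{1-\zeta}=\Phi_{p^r}(1)=p,
\]
where $\Phi_{p^r}$ is the $p^r$th cyclotomic polynomial. Hence the principal ideal $(2-2\cos\frac{2\pi}{p^r})$ has absolute norm $p$, so it is a prime ideal of $\Oc_K$ of residue degree one lying above $p$. Because $p$ is totally ramified in $K$, the prime $\Pf$ is the unique prime above $p$ and has residue degree one with $\N{\Pf}=p$; therefore $(2-2\cos\frac{2\pi}{p^r})=\Pf$.

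The only point needing care is the norm bookkeeping that pins down the exponent: I must rule out that the generator produces a proper power $\Pf^{k}$ with $k>1$. The computation $\Nm{K/\QQ}{2-2\cos\frac{2\pi}{p^r}}=p=\N{\Pf}$ settles this immediately, forcing $k=1$. As an independent check one can evaluate the valuation directly: if $\Pf_L=(1-\zeta)$ denotes the prime of $\Oc_L$ above $p$, then $v_{\Pf_L}(1-\zeta)=1$ and $1-\zeta^{-1}=-\zeta^{-1}(1-\zeta)$ is an associate of $1-\zeta$, so $v_{\Pf_L}(2-2\cos\frac{2\pi}{p^r})=2$; since $\Pf$ ramifies with index $2$ in $L/K$ this yields $v_\Pf(2-2\cos\frac{2\pi}{p^r})=1$. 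Both routes avoid working with an explicit integral basis of $\Oc_K$, which is where a direct calculation would otherwise become unpleasant.
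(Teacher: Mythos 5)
Your proof is correct and takes essentially the same route as the paper: both start from the factorization $(1-\zeta_{p^r})(1-\zeta_{p^r}^{-1})=2-2\cos\frac{2\pi}{p^r}$ and deduce total positivity from the fact that each embedding sends this element to $|1-\zeta_{p^r}^a|^2>0$ (equivalently, every conjugate of $\cos\frac{2\pi}{p^r}$ is strictly less than $1$). The paper merely asserts that this element generates $\Pf$, while you justify it via the norm computation ${\rm N}_{K/\QQ}\bigl(2-2\cos\frac{2\pi}{p^r}\bigr)=\Phi_{p^r}(1)=p$ together with total ramification of $p$; this fills in a detail the paper leaves implicit but is not a genuinely different argument.
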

\begin{proof}
	Since $\sigma(\cos\frac{2\pi}{p^r})<1$ for all $\sigma\in$Gal$(K/\QQ)$, $2-2\cos\frac{2\pi}{p^r}$ is totally positive in $K$.
	Moreover,
	\[
		(1-\zeta_{p^r})(1-\zeta_{p^r}^{-1})=2-2\cos\frac{2\pi}{p^r}
	\]
	generates $\Pf$.
\end{proof}
\begin{proposition}\label{prop:NF-MaximalPrimeClassification}
	There exists an Arakelov-modular lattice of level $\ell$ over $\QQ(\zeta_{p^r}+\zeta_{p^r}^{-1})$ if and only if $\ell\in\text{Mod}(p^r)$, where Mod$(p^r)$ is given by
	\begin{itemize}
		\item[1.] Mod$(p^r)=\{1,p\}$, if $p\md{1}{4}$;
		\item[2.] Mod$(p^r)=\{1\}$, if $p\md{3}{4}$.
	\end{itemize}
\end{proposition}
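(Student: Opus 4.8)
The plan is to prove both equalities by combining the upper bounds already recorded in Corollary~\ref{cor:NF-MOD} with explicit constructions supplying the reverse inclusions. Corollary~\ref{cor:NF-MOD} gives $\text{Mod}(p^r)\subseteq\{1\}$ when $p\md{3}{4}$ and $\text{Mod}(p^r)\subseteq\{1,p\}$ when $p\md{1}{4}$, so in each case it remains only to exhibit, for every candidate level $\ell$, a totally positive $\alpha\in K^\times$ and a $\beta\in K^\times$ with $\ell=\beta^2$ such that $v_\Pf(\alpha^{-1}\beta\D_K^{-1})$ is even at every prime ideal $\Pf$ of $\Oc_K$; by Proposition~\ref{prop:NF-vPeventotalreal} this is exactly what is needed. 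The crucial ingredient beyond the trace-type analysis of Proposition~\ref{prop:NF-ModMaximalPrime} is that the freedom in choosing $\alpha$ lets us correct the parity of the valuation at the ramified prime.

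First I would isolate why only the prime above $p$ matters. Since $p$ is the unique ramified prime and is totally ramified, there is a single prime $\Pf$ above it; write $\Pf=(\pi)$ with $\pi=2-2\cos\frac{2\pi}{p^r}$ totally positive, as in Lemma~\ref{lem:NF-totalpos}. For any prime $\Pf'\neq\Pf$ one has $v_{\Pf'}(\D_K^{-1})=0$; moreover if $\beta^2\in\{1,p\}$ then $2v_{\Pf'}(\beta)=v_{\Pf'}(\beta^2)=0$, and the elements $\alpha$ I shall use (namely $1$ or $\pi$) satisfy $v_{\Pf'}(\alpha)=0$. Hence $v_{\Pf'}(\alpha^{-1}\beta\D_K^{-1})=0$ automatically at every unramified prime, and the whole problem reduces to achieving an even value at $\Pf$. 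Recalling $s_1=v_\Pf(\D_K^{-1})$ and $s_2=\tfrac12 v_\Pf(p)=v_\Pf(\beta)$ when $\beta=\sqrt p$, the parity at $\Pf$ is governed by $s_1$, $s_2$ and $v_\Pf(\alpha)$, and the choice $\alpha=\pi$ flips this parity since $v_\Pf(\pi)=1$ while leaving every other prime untouched.

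With this in hand the case analysis is short. When $p\md{3}{4}$, equation (\ref{eqn:NF-vBD}) gives $s_1$ even, so $\beta=1,\ \alpha=1$ works and $1\in\text{Mod}(p^r)$ (this is the trace-type lattice of Proposition~\ref{prop:NF-ModMaximalPrime}); with the upper bound this yields $\text{Mod}(p^r)=\{1\}$. When $p\md{1}{4}$, $s_1$ is odd. For $\ell=1$ I would take $\beta=1$ and $\alpha=\pi$, so that $v_\Pf(\alpha^{-1}\D_K^{-1})=s_1-1$ is even, giving $1\in\text{Mod}(p^r)$. For $\ell=p$ I would use $\beta=\sqrt p\in K$ (available precisely because $p\md{1}{4}$), so $v_\Pf(\beta)=s_2$: if $p\md{5}{8}$ then $s_2$ is odd and $s_1+s_2$ is even, so $\alpha=1$ suffices (the trace-type case), while if $p\md{1}{8}$ then $s_2$ is even and $s_1+s_2$ is odd, so $\alpha=\pi$ restores parity. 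In every sub-case both $1$ and $p$ lie in $\text{Mod}(p^r)$, so with the upper bound $\text{Mod}(p^r)=\{1,p\}$.

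The computations are all routine; the one point requiring care is the parity bookkeeping at $\Pf$, and the only genuine structural input is Lemma~\ref{lem:NF-totalpos}, which guarantees that $\Pf$ admits a totally positive generator. This is exactly what makes $\alpha=\pi$ an admissible (totally positive, hence positive-definite) choice and lets the non-trace-type constructions flip the parity at the ramified prime; without such a generator one could not close the cases $p\md{1}{4}$ with $\ell=1$ and $p\md{1}{8}$ with $\ell=p$, which are precisely the cases that fail for trace type in Proposition~\ref{prop:NF-ModMaximalPrime}.
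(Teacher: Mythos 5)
Your proof is correct and follows essentially the same route as the paper: the upper bound comes from Corollary~\ref{cor:NF-MOD}, and membership is established via Proposition~\ref{prop:NF-vPeventotalreal} by using the totally positive generator $\pi=2-2\cos\frac{2\pi}{p^r}$ of $\Pf$ from Lemma~\ref{lem:NF-totalpos} to flip the parity of the valuation at the ramified prime (the paper takes $\alpha=\pi^{-1}$ where you take $\alpha=\pi$, which is equivalent for the parity bookkeeping). The only cosmetic difference is that you write out the $p\equiv3\pmod 4$ and $p\equiv5\pmod 8$ subcases explicitly, whereas the paper delegates them to Proposition~\ref{prop:NF-ModMaximalPrime}.
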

\begin{proof}
	1. Take $p\md{1}{4}$, $\ell=1$.
	Let $\alpha=(2-2\cos\frac{2\pi}{p^r})^{-1}$, by Lemma \ref{lem:NF-totalpos}, $\alpha$ is totally positive.
	Moreover,
	\[
		v_\Pf(\alpha^{-1}\D_K^{-1})=1+s_1
	\]
	is even. 
	By Proposition \ref{prop:NF-vPeventotalreal}, $1\in\text{Mod}(p^r)$.
	
	2. Take $p\md{1}{8}$, $\ell=p$.
	Let $\alpha=(2-2\cos\frac{2\pi}{p^r})^{-1}$ and $\beta=\sqrt{p}$, then $\ell=\beta^2$ and
	\[
		v_\Pf(\alpha^{-1}\beta\D_K^{-1})=1+s_1+s_2
	\]
	is even. 
	By Proposition \ref{prop:NF-vPeventotalreal}, $p\in\text{Mod}(p^r)$.

	By Corollary \ref{cor:NF-MOD} and Proposition \ref{prop:NF-ModMaximalPrime}, the proof is completed.
\end{proof}
\begin{example}
Take $K=\QQ(\zeta_{13}+\zeta_{13}^{-1})$, by Proposition~\ref{prop:NF-MaximalPrimeClassification} there exists a $6-$dimensional unimodular lattice over $K$.
The ideal lattice $(\Pf_{13}^{-3},(2-2\cos\frac{2\pi}{13})^{-1})$ gives us such a lattice, where $\Pf_{13}$ is the unique prime ideal in $\Oc_K$ above $13$.
This lattice is isometric to the lattice $\ZZ^6$.
\end{example}

%
%

\subsection{Maximal Real Subfield of a Cyclotomic Field -- The Non-Prime Power Case}\label{sec:NF-maximalrealnonprimepower}
Let $n\not\equiv 2~{\rm mod}~4$ be an integer which is not a prime power.
Set $L=\QQ(\zeta_n)$ and $K=\QQ(\zeta_n+\zeta_n^{-1})$. 
For any $p\in\QQ$ a prime dividing $n$, write $n=p^{r_p}n'_p$ with $(p,n'_p)=1$. 
Let $\Pf_p$ be the prime ideal in $\Oc_K$ above $p$. 
We have $v_{\Pf_p}(p)=p^{r_p-1}(p-1)$ and $v_{\Pf_p}(\D_K)=p^{r_p-1}(pr_p-r_p-1)$.

For any divisor $d$ of $n$, define 
\[
	d_{mod3}=\prod_{\substack{p|d \\ p\equiv3~{\rm mod}~4}} p, \qquad\qquad d_{mod1} = \prod_{\substack{p|d\\ p\equiv1~{\rm mod}~4}}p,\qquad\qquad \tilde{n}=\prod_{\substack{p\in\Omega(K)\\p\neq2}}p.
\]
\begin{lem} \label{lem:NF-square}
\begin{itemize}
\item[1.] For any $d|n$, $d_{mod1}$ is always a square in $K$.
\item[2.] For any $d|n$, $d_{mod3}$ is a square in $K$ if and only if one of the following conditions is satisfied:
         \begin{itemize}
         \item $n$ is even
         \item $n$ is odd and $d_{mod3}$ has an even number of distinct prime factors 
         \end{itemize}
\item[3.] For $n$ even, $2$ is a square in $K$ if and only if $n\equiv 0~{\rm mod}~8$.
\end{itemize}
\end{lem}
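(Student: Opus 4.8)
The plan is to recast every clause as a statement about which real quadratic fields embed in $L=\QQ(\zeta_n)$. For a positive integer $m$, the phrase ``$m$ is a square in $K$'' means $\sqrt m\in K$; since $m>0$ makes $\QQ(\sqrt m)$ totally real and $K=\QQ(\zeta_n+\zeta_n^{-1})$ is the fixed field of complex conjugation on $L$, every real subfield of $L$ already lies in $K$. Hence $\sqrt m\in K$ if and only if $\QQ(\sqrt m)\subseteq L$, and the three clauses all become questions about quadratic subfields of the cyclotomic field $L$.

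I would use two ingredients. For the containment (``is a square'') direction, the explicit square roots available inside cyclotomic fields: the quadratic Gauss sum gives $\sqrt{(-1)^{(p-1)/2}p}\in\QQ(\zeta_p)$ for every odd prime $p$, while $\sqrt2=\zeta_8+\zeta_8^{-1}\in\QQ(\zeta_8)$ and $i=\zeta_4\in\QQ(\zeta_4)$, with $i\in L$ iff $4\mid n$. For the non-containment direction, the sharp criterion that, for squarefree $m$, one has $\QQ(\sqrt m)\subseteq\QQ(\zeta_N)$ if and only if $|\di{\QQ(\sqrt m)}|\mid N$, where $\di{\QQ(\sqrt m)}$ equals $m$ when $m\md{1}{4}$ and $4m$ when $m\md{2,3}{4}$.

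The three parts then follow by arithmetic. In Part 1, $d_{mod1}=\prod_{p\mid d_{mod1}}p$ with each $p\md{1}{4}$, so $\sqrt{d_{mod1}}=\prod_p\sqrt p\in\QQ(\zeta_p)\subseteq L$ unconditionally. In Part 2, writing $d_{mod3}$ as a product of $k$ distinct primes $\md{3}{4}$, the product $\prod_{p\mid d_{mod3}}\sqrt{-p}=\sqrt{(-1)^k d_{mod3}}$ lies in $L$; it equals $\sqrt{d_{mod3}}$ when $k$ is even (always a square) and $i\sqrt{d_{mod3}}$ when $k$ is odd, whence for $k$ odd we get $\sqrt{d_{mod3}}\in L$ iff $i\in L$ iff $4\mid n$. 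Since $n\not\equiv2\pmod4$ forces an even $n$ to be divisible by $4$, the $k$-odd case is a square exactly when $n$ is even; combining the two parities reproduces the stated dichotomy. In Part 3, $\sqrt2\in\QQ(\zeta_8)\subseteq L$ when $8\mid n$, and $\di{\QQ(\sqrt2)}=8$ gives the converse, so $2$ is a square iff $n\md{0}{8}$.

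The one genuinely non-elementary step is the converse in Part 3, namely that $\sqrt2\notin\QQ(\zeta_n)$ when $n\equiv4\pmod8$: here $2$ ramifies with the same ramification index in $\QQ(\sqrt2)$ and in the $2$-part $\QQ(\zeta_4)$ of $L$, so a plain ramification count is inconclusive and I would instead invoke the exact conductor $8$ of $\QQ(\sqrt2)$ (equivalently, that the real quadratic field $\QQ(\sqrt2)$ first occurs in $\QQ(\zeta_8)$). Everything else is an explicit construction together with the standard fact that $i\in\QQ(\zeta_n)$ iff $4\mid n$; the only bookkeeping to watch is the residue of $d_{mod3}$ modulo $4$ and the repeated passage from ``$4\mid n$'' to ``$n$ even'' through the hypothesis $n\not\equiv2\pmod4$.
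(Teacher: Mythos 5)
Your proof is correct and follows essentially the same route as the paper: the quadratic Gauss sum facts $\sqrt{p}\in\QQ(\zeta_p)$ for $p\md{1}{4}$ and $\sqrt{-p}\in\QQ(\zeta_p)$ for $p\md{3}{4}$, products of the $\sqrt{-p}$ handling $d_{mod3}$ with the parity of the number of prime factors deciding whether a factor of $i$ appears, and the reduction to whether $i\in L$, i.e.\ $4\mid n$. The only divergence is the converse of Part 3, where you import the conductor/discriminant criterion for $\QQ(\sqrt{2})\subseteq\QQ(\zeta_N)$, while the paper argues more elementarily that $\sqrt{2}\in L$ together with $i\in L$ forces $\zeta_8\in L$ and hence $8\mid n$; both are valid.
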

\begin{proof}
	For any odd prime $p|n$, $\QQ(\zeta_p)\subseteq L$ and \cite[p.17]{Washington}
\[
\sqrt{p}\in\QQ(\zeta_p)\iff p\equiv 1~{\rm mod}~4,\qquad \sqrt{-p}\in\QQ(\zeta_p)\iff p\equiv3~{\rm mod}~4.
\]
Part 1 follows immediately. 

To prove 2, assume $p\equiv3~{\rm mod}~4$, then $\sqrt{-p}\in L$.

If $n$ is even, $i\in L$, $\sqrt{p}=\frac{\sqrt{-p}}{i}\in L\cap\RR=K$.

If $n$ is odd and $d_{mod3}$ has an even number of distinct prime factors,
\[
\sqrt{d_{mod3}} = \prod_{\substack{p|d\\p\equiv3~{\rm mod}~4}}\sqrt{p} = \prod_{\substack{p|d\\p\equiv3~{\rm mod}~4}}\sqrt{-p}\in L\cap\RR=K.
\]

On the other hand, assume $n$ is odd, and $d_{mod3}$, a square in $K$, has an odd number of distinct prime factors. 
Let $p_0$ be any prime factor of $d_{mod3}$. We have $\sqrt{-p_0}\in L$ and
\[
\sqrt{p_0}=\frac{\sqrt{d_{mod3}}}{\prod_{\substack{p|d_{mod3}\\ p\neq p_0}}\sqrt{p}} = \frac{\sqrt{d_{mod3}}}{\prod_{\substack{p|d_{mod3}\\ p\neq p_0}} \sqrt{-p}} \in L.
\]
So $i=\frac{\sqrt{-p_0}}{\sqrt{p_0}}\in L$, which implies $4|n$, a contradiction.

Now consider $n$ even. If $8|n$, $\sqrt{2}\in\QQ(\zeta_8)\cap\RR\subseteq L\cap\RR=K$.

Conversely, if $\sqrt{2}\in K\subseteq L$, since $i\in L$, we have $\zeta_8\in K$ and hence $8|n$.
\end{proof}

Let Mod$_T(n)$ denote the set of $\ell$ such that there exists an Arakelov-modular lattice of trace type of level $\ell$ over $K$.

\begin{lem}\label{lem:NF-primefactor}
Mod$_T(n)\neq\emptyset$ if and only if
\begin{itemize}
	\item[1.] $n_{mod1}=1$;
	\item[2.] For any $\ell\in Mod_T(n)$, $\ell=\tilde{n}$ or $2\tilde{n}$ and $\ell|n$;
	\item[3.] $\ell$ is a square in $K$.
\end{itemize}
\end{lem}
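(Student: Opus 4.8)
The plan is to reduce everything to the valuation criterion of Proposition~\ref{prop:NF-vPeventotalreal} (taken with $\alpha=1$, since we are in the trace-type case) and then read off the three conditions from a local analysis at each ramified prime. First I would record the ramification data of $K$: since $K/\QQ$ is abelian, all primes of $\Oc_K$ above a fixed rational prime $p$ are conjugate, so the valuations $v_{\Pf_p}(p)$ and $v_{\Pf_p}(\D_K)$ stated before Lemma~\ref{lem:NF-square} do not depend on the chosen prime above $p$. Because $p-1$ is even for odd $p$, and $r_2\geq 2$ whenever $2\mid n$ (as $n\not\equiv 2\pmod 4$), every $p\mid n$ ramifies with even ramification index; hence $\Omega'(K)=\Omega(K)=\{p:p\mid n\}$, and Lemma~\ref{lem:NF-lram} forces any $\ell\in\mathrm{Mod}_T(n)$ to be a square-free divisor of $\prod_{p\mid n}p$. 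By Proposition~\ref{prop:NF-vPeventotalreal}, such an $\ell$ lies in $\mathrm{Mod}_T(n)$ if and only if there is $\beta\in K^\times$ with $\ell=\beta^2$ (this is condition~3) and $v_\Pf(\beta)\equiv v_\Pf(\D_K)\pmod 2$ for every prime $\Pf$ of $\Oc_K$.

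The key computation is the parity of the two relevant valuations. Writing $\ell=\beta^2$ and using that $\ell$ is a square-free integer, I get $v_{\Pf_p}(\beta)=0$ when $p\nmid\ell$ and $v_{\Pf_p}(\beta)=\tfrac12 v_{\Pf_p}(p)$ when $p\mid\ell$. From $v_{\Pf_p}(p)=p^{r_p-1}(p-1)$ and $v_{\Pf_p}(\D_K)=p^{r_p-1}(pr_p-r_p-1)$ I would check: for odd $p\mid n$ the factor $p^{r_p-1}$ is odd, so $v_{\Pf_p}(\D_K)$ has the parity of $(p-1)r_p-1$, which is odd, while $\tfrac12 v_{\Pf_p}(p)$ has the parity of $\tfrac{p-1}{2}$, hence is even for $p\equiv 1\pmod 4$ and odd for $p\equiv 3\pmod 4$. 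For $p=2$ (so $4\mid n$ and $r_2\geq 2$) one checks $v_{\Pf_2}(\D_K)=2^{r_2-1}(r_2-1)$ is even.

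With these parities in hand the three conditions fall out. If some odd $p\mid n$ did not divide $\ell$, then at $\Pf_p$ we would have $v_{\Pf_p}(\beta)=0$ but $v_{\Pf_p}(\D_K)$ odd, contradicting the criterion; hence every odd $p\mid n$ divides $\ell$, i.e. $\tilde n\mid\ell$. Combined with $\ell\mid\prod_{p\mid n}p$ and the fact that $\prod_{p\mid n}p$ equals $\tilde n$ (if $n$ is odd) or $2\tilde n$ (if $n$ is even), this yields $\ell=\tilde n$ or $\ell=2\tilde n$, and in either case $\ell\mid n$ (condition~2). Next, for each odd $p\mid\ell$ the criterion demands $\tfrac12 v_{\Pf_p}(p)\equiv v_{\Pf_p}(\D_K)\pmod 2$, i.e. $\tfrac{p-1}{2}$ odd, so $p\equiv 3\pmod 4$; since every odd $p\mid n$ divides $\ell$, no prime $\equiv 1\pmod 4$ can divide $n$, giving $n_{mod1}=1$ (condition~1). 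Condition~3 is just the existence of $\beta$ in the criterion. For the converse I would take the forced candidate $\ell=\tilde n$ (or $\ell=2\tilde n$ when $8\mid n$) and verify that the parity condition holds precisely when $\ell$ is a square in $K$, the squareness being decided by Lemma~\ref{lem:NF-square}; this exhibits the explicit ideal realizing the lattice.

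The step I expect to be the main obstacle is the careful, case-by-case bookkeeping at the prime $2$: whether $2$ may or must divide $\ell$ depends on $r_2$ (the case $r_2=2$, i.e. $4\mid n$ but $8\nmid n$, versus $r_2\geq 3$, i.e. $8\mid n$), and this $2$-adic parity must be reconciled simultaneously with the squareness criterion for $2$ of Lemma~\ref{lem:NF-square}(3). Matching these two conditions at $2$—while keeping the odd-prime analysis uniform through the single formula for $v_{\Pf_p}(\D_K)$—is where the argument is most delicate.
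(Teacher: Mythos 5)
Your proposal is correct and follows essentially the same route as the paper's proof: reduce to the valuation criterion of Proposition~\ref{prop:NF-vPeventotalreal} with $\alpha=1$, use Lemma~\ref{lem:NF-lram} to bound $\ell$, and read off conditions 1--3 from the parity of $v_{\Pf_p}(\beta)$ versus $v_{\Pf_p}(\D_K)$ at each ramified prime. You work out the local parities (especially at $p=2$) in more detail than the paper's rather terse argument, and you correctly identify the reconciliation of the $2$-adic parity with Lemma~\ref{lem:NF-square}(3) as the delicate point.
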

\begin{proof}
	First we assume Mod$_T(n)\neq\emptyset$.
	For any $p\in\Omega(K)$, $p\neq2$, $v_{\Pf_p}(\D_K^{-1})=-p^{r_p-1}(pr_p-r_p-1)$ is odd.
	By Proposition \ref{prop:NF-vPeventotalreal}, $\ell$ is a square, $p|\ell$ and $\frac{1}{2}v_{\Pf_p}(\ell)=\frac{1}{2}p^{r_p-1}(p-1)$ must be odd.
	The proof then follows from Lemma \ref{lem:NF-lram}. 

	Conversely, assume all three conditions are satisfied.
	By Proposition \ref{prop:NF-vPeventotalreal}, it suffices to prove $v_{\Pf_p}(\beta\D_K^{-1})$ is even for all prime ideal $\Pf_p$, where $v_{\Pf_p}(\beta)=\frac{1}{2}v_{\Pf_p}(\ell)$.
	Conditions 1 and 2 ensure that $v_{\Pf_p}(\beta\D_K^{-1})$ is even for all prime ideal $\Pf_p$.
\end{proof}
The above discussion gives the characterization of the existence of Arakelov-modular lattices of trace type over $K$, the maximal real subfield of the cyclotomic field generated by a primitive $n$th root of unity:
\begin{proposition}\label{prop:NF-totalrealnonprime}
Let $n\not\equiv2~{\rm mod}~4$ be a positive integer which is not a prime power.

If $n$ has any prime factor $p\equiv1~{\rm mod}~4$, Mod$_T(n)=\emptyset$. 

Otherwise,
\begin{enumerate}
\item If $n$ is odd and has an even number of distinct prime factors, Mod$_T(n)=\{\tilde{n}\}$; 
\item If $n$ is odd and has an odd number of distinct prime factors, Mod$_T(n)=\emptyset$; 
\item If $n=4m$, where $m$ is odd, Mod$_T(n)=\{\tilde{n}\}$; 
\item If $n=2^rm$, where $r\geq3$ and $m$ is odd, Mod$_T(n)=\{\tilde{n},2\tilde{n}\}$. 
\end{enumerate}  
\end{proposition}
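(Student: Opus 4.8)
The plan is to read off the statement from Lemmas~\ref{lem:NF-primefactor} and~\ref{lem:NF-square} by a short case analysis. The one standing observation I would record at the outset is that once $n_{mod1}=1$, every odd prime dividing $n$ is congruent to $3\bmod 4$, so $\tilde{n}=n_{mod3}$ is precisely the radical of the odd part of $n$; in particular the number of distinct prime factors of $\tilde{n}$ equals that of $n$ when $n$ is odd. First I would settle the exceptional clause: if $n$ has a prime factor $p\md{1}{4}$ then $n_{mod1}\neq 1$, so condition~1 of Lemma~\ref{lem:NF-primefactor} fails and $\text{Mod}_T(n)=\emptyset$. For the remaining cases I assume $n_{mod1}=1$; Lemma~\ref{lem:NF-primefactor} then tells me that the only candidate levels are $\ell=\tilde{n}$ and $\ell=2\tilde{n}$ subject to $\ell\mid n$, and that such an $\ell$ belongs to $\text{Mod}_T(n)$ exactly when it is a square in $K$. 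Thus the whole problem collapses to deciding, in each case, which of $\tilde{n}$ and $2\tilde{n}$ are squares in $K$ via Lemma~\ref{lem:NF-square}.

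Next I would run the four cases. If $n$ is odd, the constraint $\ell\mid n$ kills the even candidate $2\tilde{n}$, leaving only $\tilde{n}$; by the odd branch of Part~2 of Lemma~\ref{lem:NF-square} this is a square in $K$ iff $\tilde{n}$ has an even number of distinct prime factors, i.e. iff $n$ does, which separates (1) from (2). If instead $4\mid n$, then $n$ is even and both $\tilde{n}$ and $2\tilde{n}$ divide $n$, so Part~2 of Lemma~\ref{lem:NF-square} makes $\sqrt{\tilde{n}}\in K$ automatic; to test the second candidate I would write $\sqrt{2\tilde{n}}=\sqrt{2}\,\sqrt{\tilde{n}}$ and use $\sqrt{\tilde{n}}\in K$ to reduce the question to whether $\sqrt{2}\in K$, which by Part~3 of Lemma~\ref{lem:NF-square} holds iff $8\mid n$. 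This distinguishes (3), where $n=4m$ with $8\nmid n$ so only $\tilde{n}$ survives, from (4), where $n=2^r m$ with $r\geq 3$ and both $\tilde{n}$ and $2\tilde{n}$ are squares.

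The step demanding the most care is the even case: one must not infer $\sqrt{2\tilde{n}}\in K$ merely from $\sqrt{2}$ and $\sqrt{\tilde{n}}$ lying in $K$ separately, but instead exploit that $\sqrt{\tilde{n}}\in K$ is already guaranteed for $n$ even, so that $\sqrt{2\tilde{n}}\in K$ \emph{iff} $\sqrt{2}\in K$. This is exactly what legitimizes splitting (3) and (4) purely by the power of $2$ dividing $n$. The rest is routine bookkeeping: matching the prime-factor count of $\tilde{n}$ with that of $n$ in the odd case, and verifying $\tilde{n},2\tilde{n}\mid n$ whenever $4\mid n$.
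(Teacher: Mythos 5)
Your proposal is correct and follows exactly the route the paper intends: the paper gives no explicit proof of Proposition~\ref{prop:NF-totalrealnonprime}, stating only that ``the above discussion'' (i.e.\ Lemmas~\ref{lem:NF-primefactor} and~\ref{lem:NF-square}) yields the characterization, and your case analysis is precisely the bookkeeping that fills in that claim. In particular your handling of the even case --- using that $\sqrt{\tilde{n}}\in K$ is automatic for $n$ even so that $2\tilde{n}$ is a square in $K$ if and only if $\sqrt{2}\in K$, i.e.\ $8\mid n$ --- is the correct way to separate cases (3) and (4).
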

Recall that the {\em minimum} of a lattice, $\mu_L$, is defined to be
\[
	\mu_L:=\min\{b(x,x):x\in L,x\neq0\}.
\]
For unimodular lattices and even $\ell-$modular lattice, where $\ell\in\{2,3,5,7,11,23\}$, the upper bounds for this  minimum are obtained in~\cite{Quebbemann,Conway1990,Rains}.
A unimodular or an even $\ell-$modular lattice for $\ell\in\{2,3,5,7,11,23\}$ that achieves this upper bound is called {\em extremal}.
For a detailed list of extremal lattices we refer the reader to the on-line lattice catalogue \cite{LatticeWeb}.
In Table~\ref{tb:NF-examples}, we list a few constructions of existing lattices, note that the first two lattices are extremal.
\begin{table}[ht!]
\begin{adjustbox}{center}
\small
\begin{tabular}{|c|c|c|c|c|c|c|c|} 
\hline	
$\ell$ &                $K$                &             $I$            &          $\alpha$                    &  Dim  & $\min$ &  NAME   \\                     
\hline
 $7$   & $\QQ(\zeta_{28}+\zeta_{28}^{-1})$ & $\Pf_{7}^{-1}\Pf_2^{-1}$   &                 $1$                  &  $6$  &   $2$  &  $A6$\^{}$(2)$ \\
 $11$  & $\QQ(\zeta_{44}+\zeta_{44}^{-1})$ & $\Pf_{11}^{-2}\Pf_2^{-1}$  &                 $1$                  &  $10$ &   $6$  &  $A10$\^{}$(3)$ \\          
 $23$  & $\QQ(\zeta_{92}+\zeta_{92}^{-1})$ & $\Pf_{23}^{-5}\Pf_2^{-1}$  &                 $1$                  &  $22$ &   $12$ &  $A22$\^{}$(6)$ \\           
\hline
\end{tabular}
\end{adjustbox}
\caption{Examples of lattices $(I,\alpha)$ obtained from $K$ such that $(I,\alpha)$ is an Dim$-$dimensional Arakelov-modular lattice of level $\ell$ with minimum $\min$ and isometric to the existing lattice NAME from \cite{LatticeWeb}. Here $\Pf_p$ denotes the unique prime ideal in $\Oc_K$ above $p$.}
\label{tb:NF-examples}
\end{table}
Furthermore, we give a new extremal lattice in Example~\ref{ex:NF-new3mod}.
\begin{example}\label{ex:NF-new3mod}
[\textbf{New extremal $3-$modular lattice}]
Take $K=\QQ(\zeta_{36}+\zeta_{36}^{-1})$, by Proposition~\ref{prop:NF-totalrealnonprime} there exists a $6-$dimensional $3-$modular lattice over $K$.
The ideal lattice $(\Pf_3^{-3}\Pf_2^{-1},1)$ gives us such a lattice with minimum $2$, where $\Pf_3$ (resp. $\Pf_2$) is the unique prime ideal in $\Oc_K$ above $3$ (resp. $2$).
This lattice is an extremal $6-$dimensional $3-$modular lattice.
\end{example}

%
%

\section{Number Fields with Odd Degree}\label{sec:NF-OddDegree}

Let $K$ be a Galois extension with odd degree $n$.
We consider $K$ to be either totally real or CM and take $\ell$ to be a positive square-free integer.
Let Mod$(K)$ denote the set of $\ell$ such that there exists an Arakelov-modular lattice of level $\ell$ over $K$.
By Corollary~\ref{cor:NF-odddegree}, Mod$(K)\subseteq\{1\}$.
Furthermore, by Propositions~\ref{prop:NF-vPeven} and \ref{prop:NF-vPeventotalreal}, we have
\begin{proposition}\label{prop:Nf-OddDegree}
	Mod$(K)\neq\emptyset$ if and only if Mod$(K)=\{1\}$ and there exists $\alpha\in K^\times$ totally positive such that for any prime ideal $\Pf$
	\begin{itemize}
		\item[1.] $v_\Pf(\alpha^{-1}\D_K^{-1})$ is even whenever $\Pf=\bar{\Pf}$ for $K$ is CM; and
		\item[2.] $v_\Pf(\alpha^{-1}\D_K^{-1})$ is even for all $\Pf$ when $K$ is totally real.
	\end{itemize}
\end{proposition}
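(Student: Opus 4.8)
The plan is to reduce everything to the single case $\ell=1$ using Corollary~\ref{cor:NF-odddegree}, and then to read off the claimed $\alpha$-criterion from the valuation characterizations of Propositions~\ref{prop:NF-vPeven} and \ref{prop:NF-vPeventotalreal}, using the fact that the auxiliary element $\beta$ becomes a unit at precisely the primes where the condition is imposed.

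First I would dispose of the possible values of $\ell$. Since $K$ has odd degree and $\ell$ is square-free, Corollary~\ref{cor:NF-odddegree} forces any $\ell$ admitting an Arakelov-modular lattice over $K$ to equal $1$; hence Mod$(K)\subseteq\{1\}$. Consequently the statements ``Mod$(K)\neq\emptyset$'', ``Mod$(K)=\{1\}$'', and ``$1\in$Mod$(K)$'' are all equivalent, so the $\Leftarrow$ direction is immediate and the only substantive point is to show that $1\in$Mod$(K)$ is equivalent to the existence of a totally positive $\alpha\in K^\times$ satisfying the stated parity condition on $v_\Pf(\alpha^{-1}\D_K^{-1})$.

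Next I would translate ``$1\in$Mod$(K)$'' through the existence criteria already proved. For $K$ CM, Proposition~\ref{prop:NF-vPeven} applied with $\ell=1$ says such a lattice exists if and only if there are $\alpha\in K^\times$ totally positive and $\beta\in K^\times$ with $\beta\bar\beta=1$ and $v_\Pf(\alpha^{-1}\beta\D_K^{-1})$ even whenever $\Pf=\bar\Pf$. The key observation is that $\beta\bar\beta=1$ gives $v_\Pf(\beta)+v_{\bar\Pf}(\beta)=0$ for every prime, so at a prime with $\Pf=\bar\Pf$ this reads $2v_\Pf(\beta)=0$, forcing $v_\Pf(\beta)=0$ there. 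Hence at exactly the primes entering the condition one has $v_\Pf(\alpha^{-1}\beta\D_K^{-1})=v_\Pf(\alpha^{-1}\D_K^{-1})$, and the criterion collapses to condition~1. For $K$ totally real the same argument via Proposition~\ref{prop:NF-vPeventotalreal} applies with $\beta^2=1$; then $\beta=\pm1$ is a global unit, so $v_\Pf(\beta)=0$ at every $\Pf$ and the criterion collapses to condition~2. In fact, since $\ell=1$ one may simply take $\beta=1$ from the start, which makes the reduction transparent.

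I do not expect a genuine obstacle, as the result is essentially a repackaging of Corollary~\ref{cor:NF-odddegree} with the two valuation criteria. The only point requiring care is the bookkeeping for the complex-conjugation action on prime ideals in the CM case, namely verifying $v_\Pf(\bar\beta)=v_{\bar\Pf}(\beta)$ and that $\Pf=\bar\Pf$ yields $v_\Pf(\beta)=0$, so that $\beta$ may legitimately be deleted from the valuation precisely where the condition is imposed.
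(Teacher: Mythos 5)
Your argument is correct and follows exactly the route the paper intends: the paper states this proposition without a separate proof, presenting it as an immediate consequence of Corollary~\ref{cor:NF-odddegree} together with Propositions~\ref{prop:NF-vPeven} and \ref{prop:NF-vPeventotalreal}, which is precisely your reduction to $\ell=1$ and the observation that $\beta$ has valuation zero at every prime where the parity condition is imposed (so one may take $\beta=1$). No gaps; your write-up simply makes explicit the details the paper leaves to the reader.
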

\begin{proposition}\label{prop:NF-OddDegreeUniMod}
Let $K$ be a Galois field with odd degree.
We assume $K$ to be either totally real or CM.
Then Mod$(K)=\{1\}$.
\end{proposition}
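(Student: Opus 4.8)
The plan is to combine the upper bound $\text{Mod}(K)\subseteq\{1\}$ supplied by Corollary~\ref{cor:NF-odddegree} with an explicit construction witnessing $1\in\text{Mod}(K)$. First I would dispose of the CM alternative: a CM field is a totally imaginary quadratic extension of its maximal totally real subfield, hence has even degree, so the odd-degree hypothesis excludes the CM case altogether and I may assume $K$ is totally real. It then suffices, by Proposition~\ref{prop:NF-vPeventotalreal} applied with $\ell=1$ (forcing $\beta=1$) and with the totally positive choice $\alpha=1$, to verify that $v_\Pf(\D_K^{-1})$ is even for every prime ideal $\Pf$ of $\Oc_K$; equivalently, that $v_\Pf(\D_K)$ is even for all $\Pf$.

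The heart of the proof is this parity claim, which is exactly where the oddness of $n$ is used. For $\Pf$ unramified over $\QQ$ one has $v_\Pf(\D_K)=0$. For $\Pf$ above a ramified prime $p$, the ramification index $e_p$ divides $n=[K:\QQ]$ and is therefore odd. I would then invoke the classical ramification-theoretic formula for the different exponent, $v_\Pf(\D_K)=\sum_{i\geq0}(|G_i|-1)$, where $G_0\supseteq G_1\supseteq\cdots$ are the ramification groups (lower numbering) of the local extension at $\Pf$, with $G_0$ the inertia group of order $e_p$ and every $G_i$ a subgroup of $G_0$. Since $|G_0|=e_p$ is odd, each $G_i$ has odd order, so every summand $|G_i|-1$ is even and thus $v_\Pf(\D_K)$ is even. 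This treats wild ramification uniformly: although wild inertia can occur at an odd prime dividing $e_p$, the oddness of all the ramification groups makes the tame contribution $e_p-1$ and every wild contribution $|G_i|-1$ alike even.

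With the parity claim established, $\D_K=\mathfrak{b}^2$ is a square ideal, where $\mathfrak{b}=\prod_\Pf\Pf^{\,v_\Pf(\D_K)/2}$. Taking $I=\mathfrak{b}^{-1}$ and $\alpha=1$ gives $I^2=\D_K^{-1}=\alpha^{-1}\D_K^{-1}$, whence $I=I^*$ and $(I,1)$ is an Arakelov-modular lattice of level $1$ by Proposition~\ref{prop:NF-vPeventotalreal}. Therefore $1\in\text{Mod}(K)$, and together with $\text{Mod}(K)\subseteq\{1\}$ this yields $\text{Mod}(K)=\{1\}$, as claimed.

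The step I expect to be the main obstacle is the wild-ramification case of the parity claim. For a tamely ramified $\Pf$ one simply has $v_\Pf(\D_K)=e_p-1$, visibly even; but when $p\mid e_p$ one must control the higher ramification groups rather than rely on the tame formula. The ramification-group expression resolves this cleanly, the decisive observation being that all the $G_i$ are forced to have odd order because they sit inside the odd-order inertia group $G_0$. Everything beyond this is routine bookkeeping.
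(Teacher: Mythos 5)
Your proof is correct and follows essentially the same route as the paper: both reduce to showing $v_\Pf(\D_K)$ is even via the formula $v_\Pf(\D_K)=\sum_{i\geq0}(|G_i|-1)$ and the observation that each ramification group has odd order (you bound $|G_i|$ by the odd inertia order $e_p$, the paper by the odd local degree $ef$; either works). Your extra remark that the CM case is vacuous for odd degree is a nice touch the paper omits, but it does not change the argument.
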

\begin{proof}
We claim that for any $\Pf$ a prime ideal in $\Oc_K$, $v_\Pf(\D_K)$ is even.
By Proposition~\ref{prop:Nf-OddDegree}, taking $\alpha=1$, the proof is completed.

\textit{Proof of claim:} Fix $\Pf$ a prime ideal in $\Oc_K$, take $p$ such that $p\ZZ=\Pf\cap\ZZ$.
Suppose $p$ has ramification index $e$ and inertia degree $f$.
Let $\QQ_p$, $K_\Pf$ be the completion of $\QQ$ (resp. $K$) with respect to the $p-$adic valuation (resp. $\Pf-$adic valuation).
Then $K_\Pf$ is a Galois extension of $\QQ_p$ with degree $ef$ \cite[p.103]{Serre}.
Let $G(K_\Pf|\QQ_p)$ be the Galois group of $K_\Pf/\QQ_p$.
For $i\geq0$, define \cite[p.61]{Serre}
\[
	G_i:=\{\sigma\in G(K_\Pf|\QQ_p)|v_\Pf(\sigma(a)-a)\geq i+1\ \forall a\in\Oc_{K_\Pf}\}.
\]
Let $\D_{K_\Pf/\QQ_p}$ be the different of $K_\Pf/\QQ_p$, then \cite[p.64]{Serre}
\[
	v_\Pf(\D_{K_\Pf/\QQ_p})=\sum_{i=0}^\infty(|G_i|-1).
\]
Since each $G_i$ is a normal subgroup of $G(K_\Pf|\QQ_p)$ \cite[p.62]{Serre}, which has odd cardinality, $v_\Pf(\D_{K_\Pf/\QQ_p})$ is even.
As \cite[p.196]{Neukirch}
\[
	\D_{K/\QQ}=\prod_\Pf\D_{K_\Pf/\QQ_p},
\]
we have $v_\Pf(\D_{K/\QQ})$ is even.
\end{proof}
\begin{remark}\label{rem:NF-OddDegreeUniMod}
	By the above proof, for a Galois field $K$ with odd degree which is either totally real or CM, $(\D_K^{-\frac{1}{2}},1)$ is an Arakelov-modular lattice of level $1$, in particular, it is a unimodular lattice.
\end{remark}
Next we give an example of unimodular lattice constructed from a totally real Galois field of odd degree.
To our best knowledge, it is a new lattice.
\begin{example}\label{ex:NF-newunimod}
[\textbf{New extremal unimodular lattice}]
Take $K$ to be the unique $21$ dimensional subfield of the cyclotomic field $\QQ(\zeta_{49})$.
By Proposition~\ref{prop:NF-OddDegreeUniMod}, there exists a unimodular lattice over $K$.
By Remark~\ref{rem:NF-OddDegreeUniMod}, $(\D_K^{-\frac{1}{2}},1)$ is such a lattice.
Using Magma~\cite{Magma}, we get this lattice has minimum $2$, hence it is an extremal $21-$dimensional unimodular lattice.
\end{example}

%
%

\section*{Acknowledgments}
The author would like to thank Prof. Fr\'ed\'erique Oggier and Prof. Christian Maire for the useful discussions.
This work is supported by Nanyang President Graduate Scholarship.

%
%
%

\end{document}